\def\T{\text}
\def\1#1{\overline{#1}}
\def\2#1{\widetilde{#1}}
\def\3#1{\widehat{#1}}
\def\4#1{\mathbb{#1}}
\def\5#1{\frak{#1}}
\def\6#1{{\mathcal{#1}}}
\def\C{{\4C}}
\def\T{\text}
\newcommand{\Om}{\Omega}
\def\C{{\Bbb C}}
\def\di{\partial}
\def\dib{\bar\partial}
\def\Label#1{\label{#1}}
\def\T{\text}
\def\1#1{\overline{#1}}
\def\2#1{\widetilde{#1}}
\def\3#1{\widehat{#1}}
\def\4#1{\mathbb{#1}}
\def\5#1{\frak{#1}}
\def\6#1{{\mathcal{#1}}}
\def\C{{\4C}}
\numberwithin{equation}{section}
\def\T{\text}
\theoremstyle{plain}
\newtheorem{theorem}{Theorem}[section]
\newtheorem{corollary}[theorem]{Corollary}
\newtheorem{lemma}[theorem]{Lemma}
\newtheorem{proposition}[theorem]{Proposition}
\theoremstyle{definition}
\newtheorem{definition}[theorem]{Definition}
\newtheorem*{Acknowledgement}{Acknowledgement}
\theoremstyle{remark}
\begin{document}
	\author[D. T. Phiet and N.V. Thu]{Dau The Phiet and Ninh Van Thu\textit{$^{1,2}$}} 
\title[Lower bounds on the Bergman metric]{Lower bounds on the Bergman metric near points of infinite type}
\address{Dau The Phiet}
		\address{Faculty of Applied Science, Ho Chi Minh City University of Technology, Vietnam National University, 268 Ly Thuong Kiet, District 10, Ho Chi Minh City, Vietnam}
		\email{dauthephiet@hcmut.edu.vn}
\address{Ninh Van Thu}
\address{\textit{$^{1}$}~Department of Mathematics, Vietnam National University at Hanoi, 334 Nguyen Trai, Thanh Xuan, Hanoi, Vietnam}
 \address{\textit{$^{2}$}~Thang Long Institute of Mathematics and Applied Sciences,
Nghiem Xuan Yem, Hoang Mai, HaNoi, Vietnam}
\email{thunv@vnu.edu.vn}
	\subjclass[2010]{Primary 32F45; Secondary 32H35.}
\keywords{Bergman metric, plurisubharmonic peak function, finite and infinite type.}
 \maketitle    
\begin{abstract}
Let $\Omega$ be a pseudoconvex domain in $\mathbb C^n$ satisfying an $f$-property for some function $f$.  We show that the Bergman metric associated to $\Omega$ has the lower bound $\tilde g(\delta_\Omega(z)^{-1})$ where $\delta_\Omega(z)$ is the distance from $z$ to the boundary $\partial\Omega$ and $\tilde g$ is a specific function defined by $f$. This refines Khanh-Zampieri's work in \cite{KZ12} with reducing the smoothness assumption of the boundary.
\end{abstract}

\section{Introduction}

Let $\Omega$ be a bounded domain in $\C^n$ with the boundary $\di\Om$, $K_\Omega(z)$ be the Bergman kernel function on $\Omega$, and $\delta_\Om(z)$ denote the distance from $z$ to the boundary of $\di\Om$. The Bergman metric associated to $\Omega$ at the point $z\in\Omega$ acting the vector $X\in T^{1,0}(\C^n)$ is defined by
\[B_\Omega(z,X) := \left(\sum\limits_{j,k=1}^n\dfrac{\di^2\log K_\Omega(z)}{\di z_j\di \bar z_k}X_j\bar X_k\right)^{1/2}.\]
It is an interesting question is to consider how fast of the Bergman metric tends to infinity uniformly at the boundary points. When $\di\Om$ is $\mathcal{C}^\infty$-smooth and $\Om$ is either strongly pseudoconvex or pseudoconvex of finite type in $\mathbb C^2$, the Bergman metric $B_\Om(z, X)$ is asymptotically equivalent to $\delta_\Om^{-1/m}(z)|X^\tau|+\delta_\Om^{-1}(z)|X^\nu|$ (see \cite{Cat89, Mc92, Die70})
where $X^\tau$ and $X^\nu$ are the tangential and normal components of $X$ and $m$ is the type
of the boundary ($m=2$ if $\Om$ is strongly pseudoconvex). For a pseudoconvex domain of finite type in $\C^n$ with $\mathcal{C}^\infty$-smooth boundary, using the subelliptic estimate for the $\dib$-Neumann problem, McNeal \cite{Mc92} gave a lower bound of this metric with rate $\delta_\Om^{-\epsilon}(z)$ for some $\epsilon>0$. This result was also obtained by Herbort \cite{Her00} and Chen \cite{Che02} by using the properties of plurisubharmonic peak functions in a H\"older space. Recently, Herbort \cite{Her14} proved that if an $(t^\epsilon\T-\tilde P)$-property (see below) holds for $\Om$ then $B_\Om(z,X)$ has the lower bound $\delta^{-\epsilon}_\Om(z)|\log(\delta_\Om(z))|^{-M}|X|$ for some $M>0$.  The novelty of the proofs by Chen and Herbort is that no smoothness assumptions of the boundary are made. It should be noted that by an amalgamation of results in \cite{Cat83,Cat87, KZ10, KZ12,Kha14}, if the boundary is smooth then the finite type condition, the subelliptic estimate for the $\dib$-Neumann problem, the existence of a family of plurisubharmonic peak functions in a H\"older space, the $(t^\epsilon\T-\tilde P)$-property, and the $(t^\epsilon\T-P)$-property (see below) are equivalent.   

For a general pseudoconvex domain $\Omega$ in $\mathbb C^n$ that is not necessary of finite type but has nevertheless smooth boundary, Khanh-Zampieri \cite{KZ10, KZ12} proved that if an $(f\T-P)$-property with $\frac{f(t)}{\ln t}\to +\infty$ holds for $\Om$ then the Bergman metric has a lower bound with the rate $ \frac{f}{\log}(\delta^{-1+\eta}_\Om(z))$ for any $\eta>0$. The aim of this paper is to improve this result by reducing the assumption of smoothness of the boundary. Here is the main result of this paper.
\begin{theorem}\label{main}
Let $\Omega$ be a bounded pseudoconvex domain in $\C^n$ with $\mathcal{C}^2$-smooth boundary $\partial\Omega$ and $\zeta$ be a boundary point. Assume that $\Omega$ has an $(f\T-P)$-property at $\zeta$ with $f$ satisfying
$$\displaystyle{g(t)^{-1}:=(\int_t^{\infty}}\dfrac{da}{af(a)}<+\infty
$$ 
for some $t\geq1$. Then there exist a neighborhood $U$ of $\zeta$ and a constant $C>0$ such that
\begin{equation}\label{eqt1.1}
B_\Omega(z,X) \geq C.\tilde g(\delta^{-1}_{\Omega}(z))|X|
\end{equation}
for any $z\in V\cap \Omega$ and $X\in T^{1,0}_z\mathbb C^n$, where the function $\tilde g$ is given by
$$
\tilde g(t)= \sqrt[4]{g(t^{1/k_0})},~t\geq 1,
$$
for some $k_0\geq 1$.
\end{theorem}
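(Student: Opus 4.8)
The plan is to run the $L^2$-$\bar\partial$ strategy of Diederich--Herbort, Chen \cite{Che02}, Herbort \cite{Her14} and Khanh--Zampieri \cite{KZ12}, the new point being that every geometric step has to be carried out with a merely $\mathcal C^2$ defining function. After a unitary change of variables placing $\zeta=0$ with the inner normal along the positive $\mathrm{Re}\,z_n$-axis, I would first reduce the claim to two $L^2$-estimates: fixing $z_0\in U\cap\Omega$ close to $\zeta$, writing $\delta=\delta_\Omega(z_0)$ and normalising $|X|=1$, Bergman's extremal formula gives
\[
B_\Omega(z_0,X)^2=\bigl(K_\Omega(z_0)\,m_\Omega(z_0,X)\bigr)^{-1},\qquad
m_\Omega(z_0,X)=\inf\bigl\{\|h\|_{L^2(\Omega)}^2:\ h\in\mathcal O(\Omega)\cap L^2(\Omega),\ h(z_0)=0,\ Xh(z_0)=1\bigr\},
\]
so it suffices to produce a competitor $h$ with $\|h\|_{L^2(\Omega)}^2$ as small as possible together with a matching upper bound for $K_\Omega(z_0)$. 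The argument takes place on $\Omega$ itself (only the weight below will be localised near $\zeta$), and it is convenient to record that a bounded $\mathcal C^2$ pseudoconvex domain is hyperconvex, so that its pluricomplex Green function is continuous and plurisubharmonic.

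Next I would feed the $(f\T-P)$-property into the Catlin--Khanh summation over the dyadic strips $\{-2^{-j}<\rho<0\}$ \cite{Cat87,KZ12}, the weights of the dyadic pieces being fixed by means of the convergence of $\int^{\infty}\frac{da}{af(a)}$. This should yield, on $\Omega\cap U$, a bounded plurisubharmonic function $\psi$ with $-1\le\psi\le 0$ and, at each $z_0$ with $\delta=\delta_\Omega(z_0)$, an associated polydisc $Q=Q(z_0,\delta)\subset\Omega$ centred at $z_0$, adapted to suitable coordinates at $z_0$, with polyradius $r_1\asymp\delta$ in the normal direction and polyradii $r_2,\dots,r_n\lesssim 1/\tilde g(\delta^{-1})$ in the complex tangential ones, on which $\psi$ stays within a bounded range and has complex Hessian $\gtrsim r(Y)^{-2}|Y|^2$ in every direction $Y$, $r(Y)$ being the polyradius of $Q$ in that direction. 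It is this step that the $\mathcal C^2$ hypothesis makes costly: without a smooth defining function one is forced to work with a fractional power $(-\rho)^{1/k_0}$ of the signed boundary distance in order to keep the weight plurisubharmonic, and the passage from the Hessian bound to the metric loses a square root twice through sub-mean-value and Cauchy--Schwarz inequalities --- this is exactly where the exponent $1/k_0$ and the fourth root in $\tilde g$ come from.

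Then I would carry out the two $\bar\partial$-estimates. Since $Q(z_0,\delta)\subset\Omega$, monotonicity of the Bergman kernel gives at once $K_\Omega(z_0)\le K_{Q(z_0,\delta)}(z_0)\asymp \mathrm{vol}(Q(z_0,\delta))^{-1}$. For the competitor, take a cut-off $\chi$ supported in $Q(z_0,\delta)$ and equal to $1$ on $\tfrac12 Q(z_0,\delta)$, put $h_0(z)=\chi(z)\,\langle z-z_0,X\rangle$, and solve $\bar\partial u=\bar\partial h_0$ on $\Omega$ with the weight $\varphi=(2n+2)G_\Omega(\cdot,z_0)+N\theta\psi+c|z|^2$, where $N,c$ are fixed large constants, $G_\Omega(\cdot,z_0)\le 0$ is the pluricomplex Green function of $\Omega$ with pole at $z_0$, and $\theta$ is a cut-off supported in $U$ and equal to $1$ near $\zeta$ (so that the error from $\bar\partial\theta$, compactly supported inside $\Omega$, is absorbed by $c|z|^2$). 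On the support of $\bar\partial h_0$, the shell $Q\setminus\tfrac12 Q$, the positivity of $i\partial\bar\partial\varphi$ is supplied by $N\,i\partial\bar\partial\psi\gtrsim N r(Y)^{-2}|Y|^2$, while $e^{-\varphi}=O(1)$ there because $G_\Omega(\cdot,z_0)$ is bounded on the shell by comparison with the Green function of $Q$; Hörmander's estimate then gives $\|u\|_{L^2(\Omega)}^2\lesssim\int_\Omega|u|^2e^{-\varphi}\lesssim r(X)^2\,\mathrm{vol}(Q(z_0,\delta))$, and the factor $(2n+2)G_\Omega(\cdot,z_0)\sim(2n+2)\log|z-z_0|$ near $z_0$ forces $u$ to vanish to second order there. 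Hence $h:=h_0-u\in\mathcal O(\Omega)\cap L^2(\Omega)$ satisfies $h(z_0)=0$, $Xh(z_0)=1$ and $\|h\|_{L^2(\Omega)}^2\lesssim r(X)^2\,\mathrm{vol}(Q(z_0,\delta))$, so $m_\Omega(z_0,X)\lesssim r(X)^2\,\mathrm{vol}(Q(z_0,\delta))$. Multiplying the two estimates, $K_\Omega(z_0)\,m_\Omega(z_0,X)\lesssim r(X)^2\lesssim \tilde g(\delta^{-1})^{-2}$, which is \eqref{eqt1.1} for $|X|=1$ and, by homogeneity, in general.

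The heart of the matter --- and the only genuinely new difficulty with respect to \cite{KZ12} --- is the construction of $\psi$ and the polydiscs $Q(z_0,\delta)\subset\Omega$ with the uniform Hessian bounds $\gtrsim r_j^{-2}$ and the tangential size $\lesssim 1/\tilde g(\delta^{-1})$ in the absence of a smooth defining function and of the smooth Catlin polydiscs available in the finite-type theory; the dyadic summation and the whole gradient- and Hessian-bookkeeping must be redone with a $\mathcal C^2$ (or Lipschitz) defining function and its fractional powers, and this is exactly where $k_0$ and the fourth root are unavoidable. Everything that follows is the by-now-standard weighted-$\bar\partial$ machinery.
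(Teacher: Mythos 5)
Your overall scheme (Bergman extremal formula, kernel monotonicity against an inscribed polydisc, H\"ormander/Donnelly--Fefferman with a Green-function pole) is coherent, but it rests entirely on a step that you flag as ``the heart of the matter'' and then do not carry out: the construction, from the $(f\T-P)$-property alone, of a \emph{single bounded} plurisubharmonic function $\psi$ and polydiscs $Q(z_0,\delta)\subset\Omega$ with $i\partial\bar\partial\psi\gtrsim r(Y)^{-2}|Y|^2$ on $Q$ and tangential radii $\lesssim 1/\tilde g(\delta_\Omega(z_0)^{-1})$. This is a Catlin-type construction, and it is exactly what is \emph{not} available here: the $(f\T-P)$-property supplies, for each $\delta$, a function $\phi_\delta$ with Hessian $\gtrsim f(\delta^{-1})^2\,Id$ only on the thin strip $\{-\delta<r<0\}$, with no control in the normal direction at the scale $r_1^{-2}\asymp\delta^{-2}$ that your weight needs on the shell $Q\setminus\tfrac12Q$, and no mechanism for assembling the $\phi_\delta$ across scales into one bounded $\psi$ without the finite-type/smoothness machinery of Catlin. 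Your ``dyadic summation'' sentence asserts the conclusion rather than proving it, and the convergence hypothesis $\int_t^\infty\frac{da}{af(a)}<\infty$ --- which is where the function $g$, hence $\tilde g$, actually comes from --- never enters your argument in a concrete way. As written, the proposal therefore has a genuine gap at its only nonstandard step.

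The paper takes a different route that avoids polydiscs and Hessian bookkeeping altogether. It first invokes Khanh's theorem (Theorem~\ref{pshpeak}), which converts the $(f\T-P)$-property with the integral condition into a H\"older-continuous plurisubharmonic peak function $\psi_\zeta$ satisfying the two-sided pointwise bounds $-c_1|z-\zeta|^\eta\le\psi_\zeta(z)\le-\bigl(1/g^*(c_2/|z-\zeta|)\bigr)^\eta$; this is where the integral condition and the function $g$ enter. It then proves a general statement (Theorem~\ref{thm}): any peak function squeezed between $-F(|z-\zeta|)$ and $-G(|z-\zeta|)$ forces $B_\Omega(w,X)\gtrsim\bigl(G^{-1}(F((3\delta_\Omega(w))^{1/k_0}))^{1/2}+\tfrac32\delta_\Omega(w)\bigr)^{-1/2}|X|$. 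The proof of that statement is a Chen-style argument using Berndtsson's form of the Donnelly--Fefferman estimate, with weights built explicitly from the peak function: a twisted logarithmic pole $g_{k_0,w}$ (Lemma~\ref{lemma3.3}), the self-bounded-gradient function $\tfrac12(\phi-\log(-\log|z-\zeta|))$ with $\phi=-\log F^{-1}(-\psi_\zeta)$, and a cut-off $\eta_\zeta$ in terms of $\log(-\log|z-\zeta|)$; only the pointwise bounds on $\psi_\zeta$ are used, no polydisc geometry. The exponents $1/k_0$ and the fourth root in $\tilde g$ arise there from the plurisubharmonicity threshold in Lemma~\ref{lemma3.3} and the $\nu=\tfrac12$ and $\delta^{1/2}$ losses in the $L^2$ estimate, not from sub-mean-value arguments on polydiscs as you suggest. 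If you want to salvage your approach, you would either need to prove the Catlin-type polydisc statement under the $f$-property (a substantial open problem in the infinite-type setting) or reroute through peak functions as the paper does.
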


 The use of plurisubharmonic peak functions enables one to weaken the smoothness assumption on the boundary.
 
In what follows, $\lesssim$ and $\gtrsim$ denote inequalities up to a positive constant. Moreover, we will use $\approx$ for the combination of $\lesssim$ and $\gtrsim$. In addition, the superscript $^*$ denotes the inverse function.

\section{The $(f\T-P)$-property and plurisubharmonic peak functions}
We start this section by the definition of the $(f\T-P)$-property.
\begin{definition}
	\Label{d1}  For a smooth, monotonic,
	increasing function $f :[1,+\infty)\to[1,+\infty)$ with $f(t){t^{-1/2}}$ decreasing, we say that $\Omega$ has the $(f\T-P)$-property (or $f$-property for short) if there exist a neighborhood $U$ of $b\Omega$ and a family of functions $\{\phi_\delta\}$ such that
	\begin{enumerate}
		\item [(i)] the functions $\phi_\delta$ are plurisubharmonic,  $\mathcal{C}^2$ on $U$, and satisfy $-1\le \phi_\delta \le0$, and
		\item[(ii)] $i\di\dib \phi_\delta\gtrsim f(\delta^{-1})^2Id$ and $|\nabla\phi_\delta|\lesssim  \delta^{-1}$ for any  $z\in U\cap \{z\in \Omega:-\delta<r(z)<0\}$, where $r$ is a $\mathcal{C}^2$-defining function of $\Om$.
	\end{enumerate}
If the boundedness condition of $\phi_\delta$ in (i) is replaced by the self-bounded gradient condition, i.e, $i\di\dib\phi_\delta\gtrsim i\di\phi_\delta \wedge \dib\phi_\delta$, then we say that $\Om$ has the  ($f\T-\tilde P$)-property.  
\end{definition} 
 
It has been proven in \cite{Cat87, Mc92} that if $\Omega\Subset \mathbb C^n$ is of finite type, then $\Omega$ satisfies the $(t^\epsilon,P)$-property. Therefore, the estimate (\ref{eqt1.1}) holds for $\tilde g(t)=t^\delta$ for some $\delta>0$. Moreover, the $(f,P)$-property holds for a large class of infinite type pseudoconvex domains in $\mathbb C^n$, such as the following example:
 
Let $\Omega\Subset \mathbb C^n$ be a domain defined by
\begin{equation}\label{eqt1.2}
\Omega=\left\{z\in \mathbb C^n\colon \mathrm{Re}(z_n)+\sum_{j=1}^{n-1} P_j(z_j)<0 \right\},
\end{equation}
where $P_j\in \mathcal{C}^\infty(\mathbb C)$ with $\Delta P_j(z_j)\gtrsim \frac{\exp(-1/|x_j|^\alpha)}{x_j^2}$ or $\frac{\exp(-1/|y_j|^\alpha)}{y_j^2}$ for $j=1,\ldots,n-1$ and $0<\alpha<1$. Then the $(f,P)$-property holds with $f(t)=\log^{1/\alpha} t$ ( see \cite{KZ10}). Combining the results of \cite[Theorem $9.2$]{Cat87}, \cite[Proposition $3$]{KZ10}, and Theorem \ref{main},  we obtain the following corollary.
\begin{corollary}\label{Cor1}
\begin{itemize}
\item[a)] Let $\Omega$ be a bounded pseudoconvex domain of finite type in $\mathbb C^n$. Then (\ref{eqt1.1})
holds for $\tilde g(t) = t^{\delta}$ for some $\delta>0$.
\item[b)] Let $\Omega$ be defined by (\ref{eqt1.2}) with $0<\alpha< 1$. Then (\ref{eqt1.1}) holds for $\tilde g(t) = \log^{\left(\frac{1}{\alpha}-1\right)/4} t$.
\end{itemize}
\end{corollary}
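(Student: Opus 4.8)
\textit{Proof proposal.} The plan is to deduce both parts of Corollary~\ref{Cor1} directly from Theorem~\ref{main}: in each case I identify the function $f$ for which the $(f\T-P)$-property is already available, check that $g(t)^{-1}=\int_t^\infty\frac{da}{af(a)}$ converges, and compute $\tilde g(t)=\sqrt[4]{g(t^{1/k_0})}$ explicitly. No analysis beyond Theorem~\ref{main} and the cited structural results enters.

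For (a), I would first recall that a bounded pseudoconvex domain of finite type has $\mathcal{C}^\infty$ (hence $\mathcal{C}^2$) boundary and, by \cite[Theorem~9.2]{Cat87} (see also \cite{Mc92}), satisfies the $(f\T-P)$-property with $f(t)=t^{\epsilon}$ for some $\epsilon>0$ depending on the type and on $n$; one may take $\epsilon\le 1/2$ as required in Definition~\ref{d1}, if necessary by passing to a smaller exponent, which only weakens the property. Then $g(t)^{-1}=\int_t^\infty a^{-1-\epsilon}\,da=\epsilon^{-1}t^{-\epsilon}<\infty$, so $g(t)=\epsilon\,t^{\epsilon}$ and
\[
\tilde g(t)=\sqrt[4]{g(t^{1/k_0})}=\epsilon^{1/4}\,t^{\epsilon/(4k_0)}.
\]
Absorbing the constant $\epsilon^{1/4}$ into the constant $C$ of Theorem~\ref{main} yields (\ref{eqt1.1}) with $\tilde g(t)=t^{\delta}$ for $\delta:=\epsilon/(4k_0)>0$.

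For (b), I would invoke \cite[Proposition~3]{KZ10}, according to which the domain (\ref{eqt1.2}) with $0<\alpha<1$ has the $(f\T-P)$-property with $f(t)=\log^{1/\alpha}t$. Since (\ref{eqt1.2}) is unbounded as written, one first restricts attention to a neighbourhood of the boundary point in question — say the origin — replacing $\Omega$ by $\Omega\cap B$ for a large ball $B$; this is legitimate because the $(f\T-P)$-family, and hence the estimate of Theorem~\ref{main}, is local near $\zeta$. The substitution $s=\log a$ then gives
\[
g(t)^{-1}=\int_t^\infty\frac{da}{a\log^{1/\alpha}a}=\int_{\log t}^\infty s^{-1/\alpha}\,ds=\frac{(\log t)^{1-1/\alpha}}{\frac1\alpha-1},
\]
which is finite precisely because $0<\alpha<1$; hence $g(t)=\left(\frac1\alpha-1\right)(\log t)^{\frac1\alpha-1}$ and
\[
\tilde g(t)=\sqrt[4]{g(t^{1/k_0})}=\left(\left(\frac1\alpha-1\right)k_0^{-(\frac1\alpha-1)}\right)^{1/4}(\log t)^{(\frac1\alpha-1)/4}.
\]
Absorbing the constant prefactor into $C$ once more gives (\ref{eqt1.1}) with $\tilde g(t)=\log^{(\frac1\alpha-1)/4}t$, as asserted.

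I do not expect a genuine obstacle at this level: the only points that need a word are the localization in (b) (because the model domain is unbounded) and the presence of the auxiliary exponent $k_0$ from Theorem~\ref{main}, and neither affects the statement, since $\tilde g$ enters (\ref{eqt1.1}) only up to a multiplicative constant. The real content is entirely in Theorem~\ref{main}, whose proof runs through the variational characterization of the Bergman metric together with an $L^2$-estimate for $\bar\partial$ against a bounded plurisubharmonic weight assembled from the $(f\T-P)$-family across dyadic scales — the device, going back to Catlin, by which the pointwise curvature gauge $f$ becomes the integrated gauge $g$ — with the plurisubharmonic peak functions being precisely what allows the boundary to be only $\mathcal{C}^2$.
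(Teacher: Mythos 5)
Your proposal is correct and follows essentially the same route as the paper: in each case you invoke the known $(f\T-P)$-property ($f(t)=t^{\epsilon}$ via \cite{Cat87,Mc92} for (a), $f(t)=\log^{1/\alpha}t$ via \cite[Proposition 3]{KZ10} for (b)), compute $g$ from the integral, and read off $\tilde g(t)=\sqrt[4]{g(t^{1/k_0})}$ up to constants absorbed into $C$. Your explicit evaluation of the integrals and your remarks on $\epsilon\le 1/2$ and on localization are slightly more careful than the paper's ``$\approx$'' computations, but the argument is the same.
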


The proof of Theorem \ref{main} is based on the following result about the  existence of  a family of plurisubharmonic peak functions which was recently proven by Khanh \cite{Kha13}. 

\begin{theorem}\Label{pshpeak} Under the assumption and notations of Theorem~\ref{main}, for any $\zeta\in b\Om$, there exists a $\mathcal{C}^2$ plurisubharmonic function $\psi_\zeta$ on $\Om$ which is continuous on $\overline{\Om}$ and peaks at $\zeta$ (that means, $\psi_\zeta(z)< 0$ for all $z\in\overline{\Om}\setminus\{\zeta\}$ and $\psi_\zeta(\zeta)=0$). Moreover, there are  some positive constants $c_1$ and $c_2$ such that the following holds for any constant $0<\eta<1$:
	\begin{itemize}
		\item[(i)] $|\psi_\zeta(z)-\psi_\zeta(z')|\le c_1|z-z'|^\eta$ for any $z, z'\in\overline{\Om}$; and
		\item[(ii)] $g\big((-\psi_\zeta(z))^{-1/\eta}\big)\le c_2|z-\zeta|^{-1}$ for any $z\in\overline{\Om}\setminus\{\zeta\}$.
	\end{itemize}
\end{theorem}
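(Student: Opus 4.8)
Following the approach of \cite{KZ10,Kha13}, the plan is to assemble $\psi_\zeta$ from rescaled members of the $f$-property family $\{\phi_\delta\}$, stacked over the dyadic scales $\delta_j=2^{-j}$ with the scale used at a point $z$ governed by $|z-\zeta|$ rather than by $\delta_\Omega(z)$, combined with a plurisubharmonic ``profile'' that encodes the distance to $\zeta$, and then to globalise by the standard patching available on a bounded (hence hyperconvex) pseudoconvex domain. Fix $\eta\in(0,1)$. Two elementary remarks on $g$ will be used throughout. Since $f(a)a^{-1/2}$ is decreasing, $f(a)\le f(1)a^{1/2}$, hence $g(t)\lesssim t^{1/2}$; in particular $g$ is continuous and strictly increasing with $g(t)\to+\infty$, and its inverse obeys $g^{*}(s)\gtrsim s^{2}$. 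And, comparing the integral $\int_{t}^{\infty}\frac{da}{af(a)}$ that defines $1/g$ with its dyadic Riemann sum and using $f(2a)\le\sqrt2\,f(a)$,
\[
\sum_{j\ge J}\frac{1}{f(2^{j})}\ \asymp\ \int_{2^{J}}^{\infty}\frac{da}{af(a)}\ =\ \frac{1}{g(2^{J})}\qquad(J\ge 1);
\]
it is this identity that turns the ``width-$\delta$'' content of $\phi_\delta$ into a rate governed by $g$.

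\textbf{The local model.} First I would work near $\zeta=0$ in coordinates where the $\mathcal{C}^2$ defining function is $r(z)=\mathrm{Re}\,z_{n}+O(|z|^{2})$, with $-r\asymp\delta_\Omega$ on a neighbourhood $U$ carrying the family $\{\phi_\delta\}$; since $\Omega$ is pseudoconvex with $\mathcal{C}^2$ boundary, Diederich--Fornaess also provides an exponent $\eta_{0}\in(0,1)$ and a defining function $\rho\asymp-\delta_\Omega$ with $-(-\rho)^{\eta_{0}}$ plurisubharmonic on $\Omega$. If $|z|\asymp 2^{-j}$ then $\delta_\Omega(z)\le|z|\lesssim 2^{-j}$, so $z$ lies in the strip $\{-2^{-j}<r<0\}$ on which $i\di\dib\phi_{2^{-j}}\gtrsim f(2^{j})^{2}\,\mathrm{Id}$, while throughout $U$ one has $|\nabla\phi_{2^{-j}}|\lesssim 2^{j}$ and $-1\le\phi_{2^{-j}}\le 0$. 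I would then set
\[
\psi^{\mathrm{loc}}(z)\ :=\ -\Theta(z)\ +\ \sum_{j\ge 1}b_{j}\,\chi_{j}(z)\,\phi_{2^{-j}}(z),
\]
where: $\Theta>0$ is a $\mathcal{C}^2$ function vanishing at $0$, built from $(-\rho)^{\eta_{0}}$ and $|z|^{2a}$ ($a>0$ small) through an increasing profile tuned via $g$ so that $\Theta(z)\asymp\bigl(g^{*}(1/|z|)\bigr)^{-\eta}$ near $0$; $\chi_{j}$ is a cut-off confining the $j$-th term to the shell $\{|z|\asymp 2^{-j}\}$, so that the sum is locally finite on $\Omega\cap U$ — hence $\psi^{\mathrm{loc}}$ is $\mathcal{C}^2$ there and, since $\sum_j b_j<\infty$, bounded and continuous up to $\overline{\Omega\cap U}$; and the weights $b_{j}>0$ are chosen, one dyadic scale at a time, just large enough that $b_{j}\,i\di\dib(\chi_{j}\phi_{2^{-j}})$ absorbs the plurisubharmonicity deficit of $-\Theta$ in that shell. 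The dyadic identity above then forces $\sum_{j\ge J}b_{j}$ to be comparable to $\Theta$ at scale $2^{-J}$, which is what produces the sharp rate.

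\textbf{Globalisation and the main obstacle.} To pass from $\psi^{\mathrm{loc}}$ to a function on all of $\Omega$ I would take a regularised maximum with a negative plurisubharmonic exhaustion of the hyperconvex domain $\Omega$, gluing far enough from $\zeta$ that $\psi_\zeta\equiv\psi^{\mathrm{loc}}$ on a full neighbourhood of $\zeta$ — this precaution is needed because otherwise the maximum could spoil~(ii). With this in hand, (ii) amounts to $-\psi_\zeta(z)\gtrsim\bigl(g^{*}(c_{2}/|z-\zeta|)\bigr)^{-\eta}$, which holds by the construction of $\Theta$ (the corrections $b_j\chi_j\phi_{2^{-j}}\le 0$ only make $\psi_\zeta$ more negative), while (i) follows by summing the H\"older-$\eta$ moduli of continuity of the pieces — each $b_j\chi_j\phi_{2^{-j}}$-term is Lipschitz with constant $\lesssim b_{j}2^{j}$ on its shell — a series made convergent by the same balancing of weights. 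The hard part will be the scale-by-scale correction itself. After the $(-\rho)^{\eta_0}$-contribution to the deficit of $-\Theta$ is disposed of using pseudoconvexity (Levi positivity of $r$) together with the Diederich--Fornaess exponent, the residual deficit is concentrated in the complex tangential directions at scale $|z|$ and is of size roughly $|z|^{-2}\Theta(z)$, whereas $\chi_j\phi_{2^{-j}}$ supplies positivity only of order $b_jf(2^j)^2$ and, worse, carries cut-off errors of order $b_j|\nabla\chi_j||\nabla\phi_{2^{-j}}|\lesssim b_j 2^{2j}$; since $f(2^j)^2\lesssim 2^j$, the weights one is forced to use are borderline, and they can be made summable only because $\Theta$ — dictated by $g$ — decays fast enough for the telescoping $\sum_{j\ge J}b_j$ to remain comparable to $\Theta$ at the corresponding scale. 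Verifying this balance, and with it the simultaneous survival of plurisubharmonicity and of both estimates, is the crux of the argument; it is precisely there that the Hessian lower bound, the gradient bound, and the uniform boundedness of the $\phi_\delta$ — all three clauses of the $f$-property — are used together.
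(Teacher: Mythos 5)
First, a point of comparison: the paper does not prove Theorem~\ref{pshpeak} at all — it is imported from Khanh \cite{Kha13} — so there is no in-paper argument to measure yours against, and your attempt must stand on its own. It has the right raw materials: the dyadic identity $\sum_{j\ge J}f(2^{j})^{-1}\asymp g(2^{J})^{-1}$ as the mechanism converting the $f$-property into the rate $g$, the globalisation away from $\zeta$, and the recognition that all three clauses of the $f$-property must enter. But the construction breaks at exactly the point you call the crux, and the failure is arithmetic rather than technical.

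The fatal step is the spatial localisation of $\phi_{2^{-j}}$ by cut-offs $\chi_{j}$ supported in the shell $\{|z|\asymp 2^{-j}\}$. There, $i\partial\bar\partial\bigl(b_{j}\chi_{j}\phi_{2^{-j}}\bigr)$ contains the sign-indefinite terms $b_{j}\phi_{2^{-j}}\,i\partial\bar\partial\chi_{j}$ and $2b_{j}\,\mathrm{Re}\bigl(i\partial\chi_{j}\wedge\bar\partial\phi_{2^{-j}}\bigr)$, of size up to $b_{j}2^{2j}$ (since $|\nabla\chi_{j}|\lesssim 2^{j}$, $|\nabla^{2}\chi_{j}|\lesssim 2^{2j}$, $|\nabla\phi_{2^{-j}}|\lesssim 2^{j}$, $|\phi_{2^{-j}}|\le 1$). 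The positivity available to absorb them is $b_{j}\chi_{j}f(2^{j})^{2}\le f(1)^{2}b_{j}2^{j}$ — short by the unbounded factor $2^{j}/f(2^{j})^{2}$, and located where $\chi_{j}$ is small anyway — plus the Hessian of $\Theta$, which is $O(2^{2j}\Theta(2^{-j}))$ and hence forces $b_{j}\lesssim\Theta(2^{-j})$. Meanwhile absorbing the tangential deficit of $-\Theta$, which by your own estimate is $\asymp 2^{2j}\Theta(2^{-j})$, by $b_{j}f(2^{j})^{2}\le f(1)^{2}b_{j}2^{j}$ forces $b_{j}\gtrsim 2^{j}\Theta(2^{-j})$. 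The two requirements are incompatible for large $j$ in every case, finite or infinite type; the same lower bound on $b_{j}$ already contradicts your claimed balance $\sum_{j\ge J}b_{j}\asymp\Theta(2^{-J})$ (since $b_{J}$ alone exceeds $\Theta(2^{-J})$ by the factor $2^{J}$), so plurisubharmonicity and the H\"older bound (i) cannot both be secured. This is precisely why the constructions in the literature (Catlin, Cho, Herbort, Khanh--Zampieri, and \cite{Kha13} itself) never cut off $\phi_{\delta}$ in space: each $\phi_{\delta_{j}}$ is kept on all of $U\cap\Omega$, costing only its weight $a_{j}\asymp f(2^{j})^{-1}$ in sup-norm because $|\phi_{\delta_{j}}|\le 1$, with $\sum_{j\ge J}a_{j}\asymp g(2^{J})^{-1}$ convergent by hypothesis; the localisation is carried by the strips $\{-\delta_{j}<r<0\}$ on which the Hessian bound holds, and the cross terms are absorbed by Cauchy--Schwarz after building in self-bounded-gradient corrections (terms of the type $e^{r/\delta_{j}}$ or regularised maxima with $r/\delta_{j}$). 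Rebuilding the sum along those lines, and then converting the resulting dependence on $\delta_{\Omega}(z)$ into the dependence on $|z-\zeta|$ that a peak function requires, is the actual content of the cited proof and is missing here.
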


The function $\psi_\zeta$ above is called a plurisubharmonic peak function at the boundary point $\zeta$. The following lemma follows immediately from Theorem~\ref{pshpeak}.
\begin{corollary} \label{co2.3}
Under the assumptions of Theorem~\ref{pshpeak}, for any $\zeta\in b\Om$ there are some positive constants $c_1$ and $c_1$ such that the following holds for any constant $0<\eta<1$:
$$
-c_1|z-\zeta|^\eta\leq \psi_\zeta(z)\leq -\left(\frac{1}{g^*(c_2/|z-\zeta|)}\right)^\eta,
$$
where $\psi_\zeta$ is the plurisubharmonic peak function given in Theorem~\ref{pshpeak} and  $g^*$ is the inverse function of $g$.
\end{corollary}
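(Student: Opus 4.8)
This corollary is essentially a restatement of Theorem~\ref{pshpeak}, so the plan is short. For the left-hand inequality, I would specialize property~(i) of Theorem~\ref{pshpeak} to $z'=\zeta$: since $\psi_\zeta(\zeta)=0$ and $\psi_\zeta\le 0$ throughout $\overline\Omega$, this reads $-\psi_\zeta(z)=|\psi_\zeta(z)-\psi_\zeta(\zeta)|\le c_1|z-\zeta|^\eta$, which is exactly $\psi_\zeta(z)\ge -c_1|z-\zeta|^\eta$.

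For the right-hand inequality the one thing to record first is that $g$ is increasing, hence invertible: from $g(t)^{-1}=\int_t^\infty\frac{da}{af(a)}$ the right-hand side is a positive, strictly decreasing function of $t$ on the half-line where it is finite (the integrand is positive since $f\ge 1$), so $g$ is strictly increasing there and tends to $+\infty$; thus $g^*$ exists and is increasing. Applying $g^*$ to both sides of property~(ii), namely $g\big((-\psi_\zeta(z))^{-1/\eta}\big)\le c_2/|z-\zeta|$, gives $(-\psi_\zeta(z))^{-1/\eta}\le g^*\big(c_2/|z-\zeta|\big)$. Raising both sides to the power $-\eta<0$ reverses the inequality and yields $-\psi_\zeta(z)\ge\big(1/g^*(c_2/|z-\zeta|)\big)^\eta$, that is, $\psi_\zeta(z)\le-\big(1/g^*(c_2/|z-\zeta|)\big)^\eta$.

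The only (very minor) obstacle is domain bookkeeping: one needs $(-\psi_\zeta(z))^{-1/\eta}$ to lie in the domain of $g$ and $c_2/|z-\zeta|$ to lie in its range, so that property~(ii) and the application of $g^*$ both make sense. Both conditions hold once $z$ is close enough to $\zeta$, because the left-hand inequality already proved gives $-\psi_\zeta(z)\le c_1|z-\zeta|^\eta$ and hence $(-\psi_\zeta(z))^{-1/\eta}\gtrsim|z-\zeta|^{-1}\to+\infty$, while $g(t)\to+\infty$ as $t\to+\infty$. This is why the conclusion is stated on a (possibly shrunken) neighborhood of $\zeta$; no analytic input beyond Theorem~\ref{pshpeak} enters.
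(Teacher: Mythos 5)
Your proof is correct and is exactly the intended argument: the paper gives no proof at all, stating only that the corollary ``follows immediately from Theorem~\ref{pshpeak}'', and your two steps (specializing (i) to $z'=\zeta$ with $\psi_\zeta(\zeta)=0$, and inverting (ii) via the increasing function $g^*$ and the negative exponent $-\eta$) are precisely what that remark elides. The domain bookkeeping you flag is a legitimate minor point the paper also passes over silently.
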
 

We also need a version of $L^2$-estimate for the $\bar\di-$equation that is generalized by Berndtsson \cite{Ber96}, due to Donnelly - Fefferman \cite{DF83}
\begin{proposition} [See Theorem $3.1$ in \cite{Ber96}]	\label{estimate}
Let $\Omega$ be a bounded pseudoconvex domain in $\C^n$ and let $\varphi$ be plurisubharmonic in $\Omega$. Let $\psi$ be plurisubharmonic and assume that $i\partial\bar \partial \psi\geq i \partial\psi \wedge \bar \partial \psi$ in the sense of distributions (which is equivalent to the fact that $\psi=-\log(-\nu)$ for some negative plurisubharmonic). Let $0<\nu<1$. Then for any $\bar\di$-closed (0,1)-form $g$ in $\Omega$, there is a solution $u$ to the equation $\bar\di u=g $ such that
\[\int_\Omega |u|^2e^{-\varphi + \nu\psi}dV \leq \dfrac{4}{\nu(1-\nu)^2}\int_\Omega |g|^2_{\di\bar\di \psi}e^{-\varphi+\nu\psi}dV,\]
Here $|g|^2_{\di\bar\di \psi}=\sum_{j,k} \psi^{j,\bar k} \bar g_j g_k$ (where $g=\sum g_j d\bar z_j$ and $ (\psi^{k,\bar j})$
 is the inverse of $(\psi_{j,\bar k}) = (\frac{\partial^2\psi}{\partial z_j \partial\bar z_k}) )$  denotes the length
of the form $g$ w.r.t. the K\"{a}hler metric $i\partial\bar \partial \psi$.
\end{proposition}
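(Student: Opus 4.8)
Write $\Phi:=\varphi-\nu\psi$, so the target weight is $e^{-\Phi}$, and let $\|\cdot\|_\Phi$ and $(\cdot,\cdot)_\Phi$ denote the norm and inner product of $L^2(\Omega,e^{-\Phi}dV)$, both on functions and on $(0,1)$-forms. Since $\Omega$ is bounded and pseudoconvex, standard weighted $L^2$-theory (Hörmander's estimate for the plurisubharmonic weight $\Phi+|z|^2$, followed by the usual exhaustion of $\Omega$ by relatively compact pseudoconvex subdomains and an approximation of $\varphi$ by strictly plurisubharmonic functions, passing to the limit by weak compactness) shows that $\bar\partial u=g$ is solvable in $L^2(\Omega,e^{-\Phi}dV)$. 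Let $u$ be the minimal solution. Then $u$ is orthogonal to $\ker\bar\partial$, and by the Hodge decomposition on the pseudoconvex domain one may write $u=\bar\partial^*_\Phi\beta$ for some $\bar\partial$-closed $(0,1)$-form $\beta$ in $\mathrm{Dom}(\bar\partial^*_\Phi)$. Consequently
\[
\|u\|_\Phi^2=(u,\bar\partial^*_\Phi\beta)_\Phi=(\bar\partial u,\beta)_\Phi=(g,\beta)_\Phi,\qquad \|u\|_\Phi^2=\|\bar\partial^*_\Phi\beta\|_\Phi^2,
\]
so everything reduces to estimating $(g,\beta)_\Phi$.

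The next step is a Cauchy--Schwarz inequality relative to the nonnegative (possibly degenerate) Hermitian form $i\partial\bar\partial\psi$: pointwise $|\langle g,\beta\rangle|\le|g|_{\partial\bar\partial\psi}\,\langle i\partial\bar\partial\psi\,\beta,\beta\rangle^{1/2}$, since $|g|_{\partial\bar\partial\psi}$ and $\langle i\partial\bar\partial\psi\,\beta,\beta\rangle^{1/2}$ are dual norms with respect to $i\partial\bar\partial\psi$; integrating against $e^{-\Phi}dV$ and applying Cauchy--Schwarz in $L^2(e^{-\Phi}dV)$ yields
\[
|(g,\beta)_\Phi|^2\le\Big(\int_\Omega|g|^2_{\partial\bar\partial\psi}\,e^{-\Phi}\,dV\Big)\Big(\int_\Omega\langle i\partial\bar\partial\psi\,\beta,\beta\rangle\,e^{-\Phi}\,dV\Big).
\]
The heart of the matter is then the a priori inequality
\[
\int_\Omega\langle i\partial\bar\partial\psi\,\beta,\beta\rangle\,e^{-\Phi}\,dV\le\frac{4}{\nu(1-\nu)^2}\,\|\bar\partial^*_\Phi\beta\|_\Phi^2
\]
for every $\bar\partial$-closed $(0,1)$-form $\beta\in\mathrm{Dom}(\bar\partial^*_\Phi)$. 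Granting it and using $\|u\|_\Phi^2=(g,\beta)_\Phi=\|\bar\partial^*_\Phi\beta\|_\Phi^2$, the two displays above give $\|u\|_\Phi^4\le\frac{4}{\nu(1-\nu)^2}\big(\int_\Omega|g|^2_{\partial\bar\partial\psi}e^{-\Phi}dV\big)\|u\|_\Phi^2$, and dividing by $\|u\|_\Phi^2$ is exactly the assertion of the proposition.

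To prove the a priori inequality I would apply the Bochner--Kodaira--Morrey--Kohn inequality for the weight $\Phi$ on the pseudoconvex domain $\Omega$ to the twisted form $\sqrt{\tau}\,\beta$, where $\tau=\tau(\psi)$ is a suitable positive function of $\psi$. Because $\bar\partial\beta=0$, one has $\bar\partial(\sqrt\tau\,\beta)=(\tfrac{d}{d\psi}\sqrt\tau)\,\bar\partial\psi\wedge\beta$ and $\bar\partial^*_\Phi(\sqrt\tau\,\beta)=\sqrt\tau\,\bar\partial^*_\Phi\beta-(\tfrac{d}{d\psi}\sqrt\tau)\langle\beta,\bar\partial\psi\rangle$; expanding the two squared norms, absorbing the cross term by a Cauchy--Schwarz with a free parameter, and then invoking (i) $i\partial\bar\partial\varphi\ge0$ (so that the curvature of $\Phi$ contributes only the ``wrong-sign'' term $-\nu\,i\partial\bar\partial\psi$), (ii) the self-bounded-gradient hypothesis $i\partial\bar\partial\psi\ge i\partial\psi\wedge\bar\partial\psi$ — which in particular forces $|\partial\psi|^2_{i\partial\bar\partial\psi}\le1$ and controls both $|\bar\partial\psi\wedge\beta|^2$ and $|\langle\beta,\bar\partial\psi\rangle|^2$ by $\langle i\partial\bar\partial\psi\,\beta,\beta\rangle$ — and (iii) pseudoconvexity (nonnegativity of the boundary term), one arrives at an inequality of the shape $c_1\|\bar\partial^*_\Phi\beta\|_\Phi^2\ge c_2\int_\Omega\langle i\partial\bar\partial\psi\,\beta,\beta\rangle e^{-\Phi}dV$, where $c_1,c_2>0$ depend on $\nu$, on the choice of $\tau$, and on the Cauchy--Schwarz parameter; optimizing these choices produces the sharp ratio $c_1/c_2=\frac{4}{\nu(1-\nu)^2}$. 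I expect this optimization, together with the functional-analytic bookkeeping (density of smooth forms in the graph norm, legitimacy of the Bochner--Kodaira--Morrey--Kohn identity for the unbounded weight $\Phi$, and the exhaustion/approximation from strictly to merely plurisubharmonic $\varphi$), to be the main obstacle. The conceptual point — the one place where the hypothesis on $\psi$ is used essentially — is that the self-bounded gradient of $\psi$, acting through the term $i\partial\psi\wedge\bar\partial\psi$, manufactures exactly enough positivity to overcome the negative term $-\nu\,i\partial\bar\partial\psi$ coming from the weight $\Phi$, with the quantitative surplus measured by $\tfrac{4}{\nu(1-\nu)^2}$.
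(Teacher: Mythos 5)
The paper does not prove this proposition: it is imported verbatim from Berndtsson \cite{Ber96}, Theorem 3.1, so the only meaningful comparison is with Berndtsson's own argument. Your outline reproduces its standard architecture: take the minimal solution $u$ in $L^2(e^{-\Phi})$ with $\Phi=\varphi-\nu\psi$, write $u=\bar\partial^*_\Phi\beta$ with $\beta$ $\bar\partial$-closed (this uses the closed-range theorem for $\bar\partial$ on bounded pseudoconvex domains, which you invoke as ``Hodge decomposition''), reduce everything via the pointwise Cauchy--Schwarz inequality relative to $i\partial\bar\partial\psi$ to the a priori estimate $\int_\Omega\langle i\partial\bar\partial\psi\,\beta,\beta\rangle e^{-\Phi}dV\le \frac{4}{\nu(1-\nu)^2}\|\bar\partial^*_\Phi\beta\|^2_\Phi$, and obtain that estimate from a twisted Bochner--Kodaira inequality in which the self-bounded-gradient hypothesis on $\psi$ compensates the wrong-sign curvature contribution $-\nu\, i\partial\bar\partial\psi$ of the weight. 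This is the right strategy and is essentially how \cite{Ber96} (and Berndtsson--Charpentier, B\l ocki, etc.) proceed.

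As a proof, however, the proposal stops at exactly the step that carries all of the content. The a priori inequality with the constant $\frac{4}{\nu(1-\nu)^2}$ is asserted, while the twisting computation that would produce it --- the choice of $\tau$, the expansion of $\|\bar\partial(\sqrt\tau\beta)\|^2_\Phi$ and $\|\bar\partial^*_\Phi(\sqrt\tau\beta)\|^2_\Phi$, the absorption of the cross terms, and the optimization over the free parameters --- is explicitly deferred as ``the main obstacle.'' Note also that applying the raw Bochner--Kodaira inequality with the weight $\Phi$ only lower-bounds the left side by $\int\tau\langle i\partial\bar\partial\Phi\,\beta,\beta\rangle e^{-\Phi}dV$, whose curvature is $i\partial\bar\partial\varphi-\nu\, i\partial\bar\partial\psi$; the positivity in $i\partial\bar\partial\psi$ that you need must be extracted from the commutator terms generated by the twist, and the self-bounded-gradient condition $i\partial\bar\partial\psi\ge i\partial\psi\wedge\bar\partial\psi$ is what makes those terms controllable --- it does not by itself ``manufacture'' the positivity. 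Until that computation is carried out (together with the regularization needed when $\psi$ satisfies the hypothesis only distributionally), you have a credible plan for some constant $C(\nu)$ rather than a proof of the stated one. For the purposes of this paper that distinction is harmless --- the proposition is only used with $\nu=1/2$ and the precise constant is irrelevant --- but as a self-contained proof of the statement as written, the key inequality remains unestablished.
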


\section{Boundary behavior of the Bergman metric}
In order to prove the Theorem \ref{main}, we will use the following localization theorem for the Bergman metric \cite{DFH84}.
\begin{theorem} \label{localization}
Let $\Omega \subset  \mathbb C^n$ be a bounded pseudoconvex domain and let $V \Subset U$ be
open neighborhoods of a point $\zeta \in \partial\Omega$. Then there exists a constant $C \geq 1$ such
that 
\begin{equation*}
C^{-1}B_{\Omega\cap U} (z,X) \leq  B_\Omega(z, X) \leq  C B_{ \Omega\cap U} (z, X)
\end{equation*}
for any $z \in \Omega\cap V$ and  $X \in \mathbb C^n$.
\end{theorem}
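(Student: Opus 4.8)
The plan is to reduce the two-sided estimate to a comparison of extremal quantities and then transfer these between $\Omega$ and $\Omega\cap U$ — in one direction by trivial restriction, in the other by an $L^2$-extension with a singular weight. Recall Kobayashi's description: for a bounded domain $D$, a point $w\in D$ and a vector $X$,
\[
K_D(w)=\sup\bigl\{|h(w)|^2:\ h\in H^2(D),\ \|h\|_{L^2(D)}\le 1\bigr\},
\]
\[
M_D(w,X)=\sup\bigl\{|Xh(w)|^2:\ h\in H^2(D),\ \|h\|_{L^2(D)}\le 1,\ h(w)=0\bigr\},
\]
and $B_D(w,X)^2=M_D(w,X)/K_D(w)$. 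Hence Theorem~\ref{localization} follows once one shows $K_\Omega(z)\approx K_{\Omega\cap U}(z)$ and $M_\Omega(z,X)\approx M_{\Omega\cap U}(z,X)$ for $z\in\Omega\cap V$, with constants independent of $z$ and $X$.

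First I would dispose of the easy halves: restricting any $h\in H^2(\Omega)$ to $\Omega\cap U$ does not increase its $L^2$-norm, so $K_{\Omega\cap U}(z)\ge K_\Omega(z)$ and $M_\Omega(z,X)\le M_{\Omega\cap U}(z,X)$, no pseudoconvexity of $\Omega\cap U$ being used. For the reverse inequalities fix $z=z_0\in\Omega\cap V$, choose open sets $V\Subset V'\Subset W\Subset U$ and a cut-off $\chi\in\mathcal C^\infty_c(U)$ with $\chi\equiv 1$ on $W$, so that $\operatorname{supp}(\bar\partial\chi)\subset U\setminus W$ keeps a fixed positive distance from $z_0$. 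Given $f\in H^2(\Omega\cap U)$, extend $\chi f$ by zero to $\Omega$ and note that $g:=f\,\bar\partial\chi$ is a smooth $\bar\partial$-closed $(0,1)$-form on $\Omega$ that vanishes near $z_0$.

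I would then apply Hörmander's $L^2$-estimate on the bounded pseudoconvex domain $\Omega$ with the weight
\[
\varphi_{z_0}(w)=m\,\lambda(w)\log|w-z_0|+C_0|w|^2,
\]
where $\lambda\in\mathcal C^\infty_c(V')$ equals $1$ near $\overline V$, $C_0$ is chosen large enough that $\varphi_{z_0}$ is strictly plurisubharmonic, and $m=2n$ (resp.\ $m=2n+2$) according to whether one is proving the kernel (resp.\ metric) comparison. This produces $u$ with $\bar\partial u=g$ and $\int_\Omega|u|^2e^{-\varphi_{z_0}}\,dV\lesssim\int_\Omega|g|^2e^{-\varphi_{z_0}}\,dV\lesssim\|f\|_{L^2(\Omega\cap U)}^2$, the last step because $e^{-\varphi_{z_0}}$ is bounded on $\operatorname{supp}(\bar\partial\chi)$. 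Since $u$ is holomorphic near $z_0$ (there $g\equiv0$), finiteness of the weighted integral forces $u(z_0)=0$ when $m=2n$, and $u(z_0)=0$ together with $(Xu)(z_0)=0$ when $m=2n+2$. Setting $F:=\chi f-u\in H^2(\Omega)$ one gets $\|F\|_{L^2(\Omega)}\lesssim\|f\|_{L^2(\Omega\cap U)}$. For the kernel comparison, taking $f$ with $f(z_0)=1$ yields $F(z_0)=1$, hence $K_\Omega(z_0)\gtrsim\|F\|^{-2}\gtrsim\|f\|^{-2}$ and, optimizing over $f$, $K_\Omega(z_0)\gtrsim K_{\Omega\cap U}(z_0)$. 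For the metric comparison, taking $f$ with $f(z_0)=0$ and using $\chi\equiv1$ near $z_0$ gives $F(z_0)=0$ and $(XF)(z_0)=(Xf)(z_0)$, so $|(Xf)(z_0)|^2/\|F\|^2\le M_\Omega(z_0,X)$, whence $M_{\Omega\cap U}(z_0,X)\lesssim M_\Omega(z_0,X)$. Combining the four estimates with the formula $B^2=M/K$ gives $B_\Omega(z,X)\approx B_{\Omega\cap U}(z,X)$ on $\Omega\cap V$.

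The main obstacle will be the uniformity of all constants in $z_0\in\Omega\cap V$ (and in $X$): this is exactly why one insists that $V\Subset U$, so that a single cut-off $\lambda$, whose gradient is supported at a fixed distance from $V$, makes both the plurisubharmonicity constant $C_0$ and the bound for $\int_\Omega|g|^2e^{-\varphi_{z_0}}\,dV$ independent of $z_0$ (the only $z_0$-dependent term in $i\partial\bar\partial\varphi_{z_0}$ is $\log|w-z_0|$ and its derivatives, which are bounded on $\operatorname{supp}(\nabla\lambda)$ uniformly in $z_0\in V$). The only other point to check carefully is the elementary lemma that a function holomorphic near $z_0\in\mathbb C^n$ and square-integrable against $|w-z_0|^{-2n}$ (resp.\ $|w-z_0|^{-2n-2}$) must vanish at $z_0$ (resp.\ vanish to second order), which is what pins down the value and the $1$-jet of $F$ at $z_0$; everything else is routine cut-off and $L^2$ bookkeeping.
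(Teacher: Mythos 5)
The paper does not prove this statement; it is quoted verbatim from Diederich--Forn\ae ss--Herbort \cite{DFH84} and used as a black box. Your argument is the standard proof of that localization result and is correct: the extremal characterizations $K_D(w)=\sup|h(w)|^2$ and $B_D^2=M_D/K_D$, the trivial restriction inequalities, and the reverse inequalities via H\"ormander's estimate with the weight $m\lambda(w)\log|w-z_0|+C_0|w|^2$ (with $m=2n$, resp.\ $2n+2$, forcing $u(z_0)=0$, resp.\ $du(z_0)=0$) all go through, and the uniformity in $z_0\in V$ is correctly traced to the fixed distance from $\operatorname{supp}\nabla\lambda$ and $\operatorname{supp}\bar\partial\chi$ to $V$. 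This matches the approach of the cited source; the only cosmetic caveat is that $\Omega\cap U$ need not be connected, so one should work with the component containing $z$ (or shrink $U$), as is implicit in the statement itself.
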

Moreover, combining with the result in Theorem \ref{pshpeak}, we can now rephrase Theorem \ref{main} in a more general setting. More precisely, we have the following theorem, which generalizes \cite[Theorem $2$]{Che02}.
\begin{theorem}\label{thm}
Let $\Omega$ be a bounded pseudoconvex domain in $\C^n$, $\zeta$ be a given boundary point, $F, G:(0,1)\to (0 ,1)$ are positive convex increasing functions satisfying that the function $-\log \circ F^{-1}(-t)$ is convex on $(0,1)$. Assume that on a neighborhood $U$ of $\zeta$, there is a plurisubharmonic function $\psi_\zeta$ peaking at $\zeta$ and satisfying

\begin{equation}\label{assumption1}
-F(|z-\zeta|)\le \psi_\zeta(z) \leq -G(|z-\zeta|)  \end{equation}
for any $z\in U\cap \Omega$. Then, there exists $k_0>1$ such that
\[B_\Omega (z,X) \gtrsim\left(  \left( G^{-1}(F((3\delta_\Omega(w))^{1/k_0})) \right)^{1/2}+ \frac{3}{2} \delta_\Omega(w) \right)^{-1/2} |X|  
\]
for any $z\in U\cap \Omega$ and any $X\in T_z^{1,0}\mathbb C^n$.
\end{theorem}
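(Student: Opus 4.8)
The plan is to follow the scheme of Chen \cite{Che02}, reducing the lower estimate for $B_\Omega$ to the construction of an $L^2$‑holomorphic function with a prescribed $1$‑jet at the given point, obtained by a $\bar\partial$‑solve with the Donnelly–Fefferman–Berndtsson estimate (Proposition~\ref{estimate}) whose weights are manufactured out of $\psi_\zeta$. By Theorem~\ref{localization} it suffices to bound $B_{\Omega\cap U}$ from below, and since $\Omega\cap U$ may be taken bounded pseudoconvex we assume $\Omega$ lies in a small neighbourhood of $\zeta$ on which \eqref{assumption1} holds. Fix $z_0\in\Omega$ near $\zeta$, put $\delta=\delta_\Omega(z_0)$, normalise $|X|=1$, and set $L(z)=\langle z-z_0,X\rangle$, so $L(z_0)=0$, $(XL)(z_0)=1$ and $|L|\le|z-z_0|$. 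We use the extremal description
\[ B_\Omega(z_0,X)^2=\Bigl(K_\Omega(z_0)\cdot\inf\{\|h\|^2_{L^2(\Omega)}:h\in\mathcal O(\Omega)\cap L^2,\ h(z_0)=0,\ (Xh)(z_0)=1\}\Bigr)^{-1}; \]
thus it is enough to bound $K_\Omega(z_0)$ from above (by an inscribed‑ball / $\psi_\zeta$‑sublevel‑set comparison together with the sub‑mean‑value inequality and \eqref{assumption1}) and to exhibit one competitor $h$ whose $L^2(\Omega)$‑norm is small enough to yield the asserted bound.

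Take $h=\chi L-u$ with $\chi=\chi_0(\psi_\zeta)$, where $\chi_0$ equals $1$ on $[-a,0]$ and $0$ on $(-\infty,-b]$, the levels $-b<-a<0$ being chosen through $F,G$ so that $z_0$ lies well inside $\{\chi=1\}$ and the shell $\{-b\le\psi_\zeta\le-a\}$ — trapped by \eqref{assumption1} between the spheres $|z-\zeta|=F^{-1}(a)$ and $|z-\zeta|=G^{-1}(b)$ — stays a controlled distance off $z_0$. Writing $\psi=-\log(-\psi_\zeta)$ (self‑bounded gradient, as needed in Proposition~\ref{estimate}) and using $\bar\partial\psi_\zeta=(-\psi_\zeta)\bar\partial\psi$, the datum $g:=\bar\partial(\chi L)=\chi_0'(\psi_\zeta)L\,\bar\partial\psi_\zeta$ is a scalar multiple of $\bar\partial\psi$, whence $|g|^2_{\partial\bar\partial\psi}\le|\chi_0'(\psi_\zeta)|^2\psi_\zeta^2|L|^2$ (the Donnelly–Fefferman trick, which simultaneously sidesteps any degeneracy of $i\partial\bar\partial\psi$). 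Apply Proposition~\ref{estimate} with this $\psi$, with a plurisubharmonic weight $\varphi$ carrying a logarithmic pole at $z_0$ of order just high enough that finiteness of $\int_\Omega|u|^2e^{-\varphi+\nu\psi}$ forces $u(z_0)=0$ and $(Xu)(z_0)=0$ — so that $h\in\mathcal O(\Omega)\cap L^2$, $h(z_0)=0$, $(Xh)(z_0)=1$ — and with a parameter $\nu\in(0,1)$, obtaining
\[ \|u\|^2_{L^2(\Omega)}\lesssim\frac{1}{\nu(1-\nu)^2}\int_{\{-b\le\psi_\zeta\le-a\}}|\chi_0'(\psi_\zeta)|^2\psi_\zeta^2|L|^2(-\psi_\zeta)^{-\nu}e^{-\varphi}\,dV. \]
Together with $\|\chi L\|^2_{L^2(\Omega)}\lesssim(G^{-1}(b))^{2n+2}$ and \eqref{assumption1}, this turns $\|h\|^2$ into an explicit expression in $a,b,\nu,\delta$.

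It remains to optimise. One takes $a\approx F((3\delta)^{1/k_0})$, $b$ comparable to $a$, and $\nu$ tending to $0$ with $\delta$ so as to neutralise the factor $(-\psi_\zeta)^{-\nu}$ without letting $\nu(1-\nu)^2$ decay too fast; the exponent $1/k_0<1$ is exactly what makes $F^{-1}(a)=(3\delta)^{1/k_0}$ exceed the radius $\sim\delta$ of the largest ball about $z_0$ contained in $\Omega$, so the shell is genuinely separated from $z_0$ and $e^{-\varphi}$ is bounded on it. The convexity hypothesis on $-\log\circ F^{-1}$ enters when one verifies that the weight assembled from $\psi_\zeta$ in this way (essentially $-\log(F^{-1}(-\psi_\zeta))$, which dominates $-\log|z-\zeta|$) is plurisubharmonic with self‑bounded gradient, so that Proposition~\ref{estimate} legitimately applies; the additive $\tfrac32\delta$ is a lower‑order term, dominated by the first one when $\delta$ is small. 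Feeding these choices back into the extremal formula yields the claimed lower bound. The genuinely hard part is this last balancing act — the size of the cut‑off region (controlling $\|\chi L\|^2$ through $G^{-1}$) against the $\bar\partial$‑error (entering through $F$, the pole order, and $\nu$), under the constraint that the upper bound for $K_\Omega(z_0)$ remain good enough: every gain is paid for by a loss, and it is precisely this bookkeeping, together with the plurisubharmonicity requirements (hence the convexity hypothesis) and the need to keep the shell off $z_0$ (hence $k_0$), that forces the particular shape of the estimate.
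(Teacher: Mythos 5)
Your overall architecture (localization, a Donnelly--Fefferman $\bar\partial$-solve with weights built from $\psi_\zeta$, a cut-off supported on a sublevel set of $\psi_\zeta$ trapped between $F^{-1}$- and $G^{-1}$-spheres) matches the paper's, but your choice of holomorphic competitor creates a genuine gap. You take $h=\chi L-u$ with $L$ linear and then divide by an upper bound for $K_\Omega(z_0)$. The only available upper bound is the inscribed-ball one, $K_\Omega(z_0)\lesssim \delta^{-2n}$ with $\delta=\delta_\Omega(z_0)$, while $\|\chi L\|^2\lesssim R^{2n+2}$ with $R:=G^{-1}(b)\approx G^{-1}(F((3\delta)^{1/k_0}))$, since $\mathrm{supp}\,\chi$ has volume $\approx R^{2n}$ and $|L|\lesssim R$ there. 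Your extremal formula then yields only $B_\Omega(z_0,X)^2\gtrsim \delta^{2n}/R^{2n+2}$, which misses the target $\approx R^{-1/2}$ by the factor $(\delta/R)^{2n}$. Since $R\geq (3\delta)^{1/k_0}\gg\delta$ (and in the infinite-type applications $R$ is only logarithmically small in $\delta$), this factor tends to $0$ and the resulting bound is weaker than trivial. The paper avoids this entirely by taking the main term to be $(z_1-w_1)K_\Omega(z,w)K_\Omega(w,w)^{-1/2}\eta$: the reproducing property gives $\int |z_1-w_1|^2|\eta|^2|K_\Omega(z,w)|^2K_\Omega(w,w)^{-1}dV\le \sup_{\mathrm{supp}\,\eta}|z_1-w_1|^2$, with no volume factor, and the normalization $K_\Omega(w,w)^{1/2}$ cancels exactly in the extremal quotient, so no upper bound for the kernel is needed. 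A linear multiplier cannot reproduce this.

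A second, related problem is your weight $\varphi$. A globally plurisubharmonic function with a logarithmic pole at $z_0$ (for instance $N\log|z-z_0|$) satisfies $e^{-\varphi}\approx |z-z_0|^{-2N}\approx R^{-2N}$ on the shell $\{-b\le\psi_\zeta\le -a\}$, not $e^{-\varphi}=O(1)$ as you assert; this multiplies the right-hand side of the $\bar\partial$-estimate by another large power of $R^{-1}$. This is precisely what Lemma~\ref{lemma3.3} is for: it produces a plurisubharmonic substitute $g_{k_0,w}$ for $\log|z-w|$ which equals $\log|z-w|$ near $w$ but vanishes identically on the support of the $\bar\partial$-datum, at the cost of the exponent $k_0$; the convexity of $-\log\circ F^{-1}$ is used there to make $-\log F^{-1}(-\psi_\zeta)$ plurisubharmonic with self-bounded gradient, not merely to validate Proposition~\ref{estimate}. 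Without both corrections the final ``balancing act'' you describe cannot be carried out.
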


Furthermore, in the case where the plurisubharmonic peak function satisfies a lower bound, we obtain the following theorem, which is a generalization of  \cite[Theorem $1$]{Her00} and \cite[Theorem $1$]{Che02}.
 \begin{theorem}\label{thm2}
Let $\Omega$ be a bounded pseudoconvex domain in $\C^n$, $\zeta$ be a given boundary point, $F: (0,1)\to (0,1)$ is positive increasing convex function satisfying that the function $-\log \circ F^{-1}(-t)$ is convex on $(0,1)$. Assume that on a neighborhood $U$ of $\zeta$, there is a plurisubharmonic function $\psi_\zeta$ peaking at $\zeta$ and satisfying

\begin{equation}\label{assumption11}
-F(|z-\zeta|)\le \psi_\zeta(z)
 \end{equation}
for any $z\in U\cap \Omega$. Then
\[\inf_{0\ne X\in T_z^{1,0}\mathbb C^n }B_\Omega (z,X)/|X|\to +\infty\]
as $z\to \zeta$.
\end{theorem}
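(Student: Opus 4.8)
The plan is to follow the circle of ideas in \cite{Her00, Che02} and in the proof of Theorem~\ref{thm}, reducing the statement to an $L^2$‑$\bar\partial$ construction. First, by Theorem~\ref{localization} it suffices to bound $B_{\Omega\cap U}$, so I would replace $\Omega$ by $\Omega_0:=\Omega\cap U$, a bounded pseudoconvex domain on which $\psi_\zeta$ is plurisubharmonic, continuous on $\overline{\Omega_0}$, negative off $\zeta$ and $=0$ at $\zeta$; shrinking $U$ we may assume $\mathrm{diam}\,\Omega_0<1$ and $-1<\psi_\zeta<0$. Using the extremal characterization $B_{\Omega_0}(z,X)=M_{\Omega_0}(z,X)/K_{\Omega_0}(z)^{1/2}$, where $M_{\Omega_0}(z,X)=\sup\{|Xh(z)|:h\in A^2(\Omega_0),\ \|h\|\le1,\ h(z)=0\}$, it is enough to produce, for each unit vector $X$ and each $z\in\Omega_0$ close to $\zeta$, a function $f\in A^2(\Omega_0)$ with $f(z)=0$, $|Xf(z)|=K_{\Omega_0}(z)^{1/2}$ and $\|f\|_{L^2(\Omega_0)}\le\varepsilon(z)$, where $\varepsilon(z)\to0$ as $z\to\zeta$ uniformly in $X$; this gives $B_{\Omega_0}(z,X)\ge1/\|f\|\ge1/\varepsilon(z)$, and then Theorem~\ref{localization} transfers the conclusion back to $B_\Omega$.

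To build $f$ I would set $f=g-u$ with $g:=\chi\,\ell\,h_z$, where $h_z:=K_{\Omega_0}(\cdot,z)/K_{\Omega_0}(z)^{1/2}$ is the normalized Bergman function ($\|h_z\|=1$, $h_z(z)=K_{\Omega_0}(z)^{1/2}$), $\ell$ is the affine map with $\ell(z)=0$ and $X\ell\equiv1$ (so $|\ell|\lesssim|\cdot-z|$ for unit $X$, which is the only place $X$ enters), and $\chi=\tilde\chi\circ\Psi$ is a cut‑off of a sublevel set of the weight $\Psi:=-\log F^{-1}(-\psi_\zeta)$; here $\Psi$ is plurisubharmonic because $t\mapsto-\log F^{-1}(-t)$ is convex increasing by hypothesis, and it blows up at $\zeta$ since $-\psi_\zeta\le F(|\cdot-\zeta|)$ forces $\Psi\ge-\log|\cdot-\zeta|$. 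Taking $\tilde\chi\equiv1$ for $\Psi\ge b(z)$ and $\equiv0$ for $\Psi\le a(z)$ with $a(z)<b(z)<\Psi(z)$ makes $\chi\equiv1$ near $z$, hence $g(z)=0$ and $Xg(z)=h_z(z)$. One then solves $\bar\partial u=\bar\partial g=\bar\partial\chi\cdot\ell h_z$ by Proposition~\ref{estimate}, using the singular weight $\varphi:=(2n+2)\log|\cdot-z|$ and a self‑bounded‑gradient weight $\psi$ built from $\psi_\zeta$ (one can take $\psi:=c\Psi$ after composing with an extra convex function, or simply $\psi:=-\log(-\psi_\zeta)$, to secure $i\partial\bar\partial\psi\gtrsim i\partial\psi\wedge\bar\partial\psi$). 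Since $\varphi<0$ and $\psi\ge0$ on $\Omega_0$, this gives $\|u\|^2_{L^2(\Omega_0)}\le\int_{\Omega_0}|u|^2e^{-\varphi+\nu\psi}\le\frac{4}{\nu(1-\nu)^2}\int_{\Omega_0}|\bar\partial g|^2_{i\partial\bar\partial\psi}\,e^{-\varphi+\nu\psi}$, while integrability of $|u|^2e^{-\varphi}$ near $z$ forces $u$ to vanish to second order at $z$; hence $f\in A^2(\Omega_0)$, $f(z)=0$ and $Xf(z)=h_z(z)=K_{\Omega_0}(z)^{1/2}$.

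It remains to prove $\|f\|\le\|g\|+\|u\|\to0$. Since $\psi_\zeta$ is continuous and peaks at $\zeta$, the increasing function $G(t):=\inf\{-\psi_\zeta(w):w\in\overline{\Omega_0},\ |w-\zeta|\ge t\}$ is positive for $t>0$, satisfies $G(0^+)=0$ and $-\psi_\zeta\ge G(|\cdot-\zeta|)$; therefore $\mathrm{supp}\,\chi\subset\{-\psi_\zeta\le F(e^{-a(z)})\}\subset B(\zeta,r(z))$ with $r(z):=G^{-1}(F(e^{-a(z)}))\to0$ as $z\to\zeta$, whence $|\ell|\lesssim r(z)$ there and $\|g\|^2\lesssim r(z)^2\|h_z\|^2=r(z)^2\to0$. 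For $u$, on $\mathrm{supp}\,\bar\partial\chi$ one has $|\bar\partial\chi|^2_{i\partial\bar\partial\psi}\lesssim|\tilde\chi'|^2$ by the self‑bounded‑gradient property, $e^{\nu\psi}$ is controlled by its value at the level $b(z)$, $|\ell|\lesssim r(z)$, and $\int|w-z|^{-(2n+2)}|h_z|^2$ over the thin region separating $z$ from $\mathrm{supp}\,\bar\partial\chi$ is governed by the distance from $z$ to that region; choosing the levels $a(z),b(z)$ (equivalently the location and width of the transition shell) suitably, these factors combine to force $\|u\|\to0$. Thus $\inf_{X\ne0}B_{\Omega_0}(z,X)/|X|\ge1/\|f\|\to+\infty$ as $z\to\zeta$.

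The technical heart, and the step I expect to be the main obstacle, is precisely this last estimate on $\|u\|$: because $z$ lies close to $\zeta$ while $\mathrm{supp}\,\bar\partial\chi$ necessarily separates $z$ from $\zeta$, the factor $e^{-\varphi}=|w-z|^{-(2n+2)}$ is largest exactly where the datum $\bar\partial g$ lives, so one must quantify how far the transition shell is from $z$ and balance it against the smallness of $|\ell|$ and $|\tilde\chi'|$ and the growth of $e^{\nu\psi}$ on the shell; making the exponents close without leaking a divergent factor is where the hypotheses on $F$ and the choice of cut‑off levels must be used with care.
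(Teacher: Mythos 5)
Your overall skeleton is the paper's: localization via Theorem~\ref{localization}, the extremal characterization of $B_\Omega$, the ansatz $f=\chi\,\ell\,h_z-u$ with the data $\bar\partial\chi\cdot\ell h_z$ supported on a shell determined by the peak function, and the Donnelly--Fefferman--Berndtsson estimate of Proposition~\ref{estimate}. But the step you yourself flag as ``the technical heart'' is a genuine gap, and with the weight you propose it cannot be closed. Taking $\varphi=(2n+2)\log|\cdot-z|$ globally, on $\mathrm{supp}\,\bar\partial\chi$ you pay $e^{-\varphi}=|w-z|^{-(2n+2)}\gtrsim d^{-(2n+2)}$, where $d=\mathrm{dist}(z,\mathrm{supp}\,\bar\partial\chi)$. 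The only compensating smallness in the integrand is $|\ell|^2\lesssim r^2$ with $r=G^{-1}(F(e^{-a}))$ the radius of $\mathrm{supp}\,\chi$ (the factors $|\tilde\chi'|^2$, $e^{\nu\psi}$ and $\int|h_z|^2=1$ give no decay). Since $-\psi_\zeta\le F(|\cdot-\zeta|)$ forces $G\le F$, hence $r\ge e^{-a}>e^{-b}$, while the shell sits inside $B(\zeta,r)$ so that $d\le|z-\zeta|+r\le 2r$, the bound you obtain on $\|u\|^2$ is at best of order $r^2d^{-(2n+2)}\ge 2^{-(2n+2)}r^{-2n}$, which diverges as $z\to\zeta$ (recall $r\to0$ is needed anyway to make $\|g\|\to0$). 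No choice of the levels $a(z)<b(z)$ escapes this, because the pole of order $2n+2$ is present on the support of the data.

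The missing idea is precisely Lemma~\ref{lemma3.3}. The paper replaces the global pole by the truncated function $g_{k_0,w}$, which equals $\log|z-w|$ near $w$ (so the weight $Cg_w-\tfrac14\log(-\log|z-w|)<2(n+1)\log|z-w|$ still forces $u_w(w)=0$ and $du_w(w)=0$), but vanishes identically on $\{\rho<-F(\epsilon^{k_0})\}\supset\mathrm{supp}\,\bar\partial\eta_\zeta$; hence $e^{-Cg_w}=1$ where the data lives and only the harmless factor $(-\log|z-w|)^{1/4}$ appears there, absorbed by $|z-w|^2\lesssim(\delta^{1/2}+\tfrac12\epsilon^{k_0})^2$. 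The price of truncating is that $g_{k_0,w}$ is no longer plurisubharmonic; Lemma~\ref{lemma3.3} shows its negative Levi contribution is $O(1/\log k_0)$ times $i\partial\bar\partial\phi+i\partial\bar\partial(-\log(-\log|z-w|))$ and is absorbed by adding $\phi$ and $-\tfrac14\log(-\log|z-w|)$ to the weight --- this is also where the convexity of $-\log\circ F^{-1}(-t)$ is really used (to get $i\partial\bar\partial\phi\ge i\partial\phi\wedge\bar\partial\phi$), not merely to make $\Psi$ plurisubharmonic as in your write-up. Without this truncation device your estimate on $\|u\|$ fails, so the proof is incomplete at its decisive point.
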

The proofs of Theorem \ref{thm} and Theorem \ref{thm2} are adapted from the argument by Chen \cite{Che02} with precise rate of lower bounds will be given below.
 
Let $z$ be a fixed point in $\Omega\cap U$, and $\pi(z)$ be the projection of $z$ to the boundary $\partial\Omega$ such that $\pi(z)\in \partial\Omega$ is the nearest boundary point to $z$.  Since $\partial\Omega$ is $\mathcal{C}^2$-smooth, by shrinking the neighborhood $U$ if necessary, we may assume that $\pi(z)$ is uniquely defined for all $z\in \Omega\cap U$. Denote by $\rho(z):=\psi_\zeta(z))$ and by $\phi(z):=-\log (F^{-1}(-\psi_\zeta(z)))$. Notice that since $-\log \circ F^{-1}(-t)$ is convex on  and increasing on $(-1,0)$, we have that $\phi$ is plurisubharmonic on $\Omega\cap U$. Moreover, by shrinking the neighborhood $U$ if necessary, we can assume that  $F(|z-w|)<1$ for any $z\in U\cap\Omega$. 

Denote by $\delta_\Omega(z)=|z-\pi(z)|$ the Euclidian distance from $z$ to $\partial\Omega$. For $k\in \mathbb N^*$, we define on $\Omega$ a function as follows
\[g_{k,w}(z) = \chi \left(\dfrac{1}{\log k} \left(-\log \phi(z)+\log (-\log \epsilon)\right)+1\right)\log |z-w|,\]
where $\chi\in \mathcal C^\infty(\mathbb R)$ is a fixed cut-off function satisfying 
\[\chi(t)= \left\{ \begin{array}{ccc}
1 & &\text{if } t\le 0 \\
0 & &\text{if }t  \ge 1.
\end{array}\right.  \]

To prove the Theorem \ref{thm} and Theorem \ref{thm2}, we need the following lemma.
\begin{lemma}\label{lemma3.3} Let $C>2(n+1)$ be a positive constant. Then there exists a constant $k_0>1$ depends on $F$ and $n$ so that for any $w\in\Omega$ with $|w-\zeta|<\epsilon^{k_0}/2$ the following holds
\begin{enumerate}[i)]
\item $g_{k_0,w}(z) = \log|z-w|$ near $w$;
\item $4Cg_{k_0,w}(z) + \phi(z)- \log(-\log |w-z|)   $ is a plurisubharmonic function on $\Omega$.
\end{enumerate} 
\end{lemma}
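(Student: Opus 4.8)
The plan is to prove (i) and (ii) separately, (i) being immediate and (ii) reducing to a single Levi-form computation on a ``transition zone''. I will write $v(z)=\log|z-w|$, $L=\log k_0$, $P=-\log\epsilon>0$, and $t(z)=\frac1L\big(-\log\phi(z)+\log P\big)+1$, so that $g_{k_0,w}=\chi(t)\,v$; for a $\mathcal C^2$ function $\psi$ I abbreviate $H_\psi(X):=\sum_{j,k}\frac{\di^2\psi}{\di z_j\di\bar z_k}X_j\bar X_k$ and $D\psi(X):=\sum_j\frac{\di\psi}{\di z_j}X_j$, so that plurisubharmonicity of $\psi$ means $H_\psi(X)\ge0$ for every $X\in\mathbb C^n$. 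Two facts will be used throughout. First, the left inequality in \eqref{assumption1} and the monotonicity of $F^{-1}$ give
\[
\phi(z)=-\log F^{-1}(-\psi_\zeta(z))\ \ge\ -\log|z-\zeta|,\qquad z\in U\cap\Omega.
\]
Second, since $F$ is convex and increasing, the function $B(s):=-\log F^{-1}(-s)$ satisfies $B''\ge(B')^2$, and therefore --- because $\phi=B\circ\psi_\zeta$ with $B'\ge0$ and $\psi_\zeta$ plurisubharmonic --- $\phi$ inherits the self-bounded gradient property $H_\phi(X)\ge|D\phi(X)|^2$ for all $X$. I would also dilate $\Omega$ once so that $\operatorname{diam}\Omega<1$ (harmless for the conclusion), making $u(z):=-\log|z-w|>0$ on $\Omega$, and assume $\epsilon\le\tfrac12$ (hence $P\ge\log2$). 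Part (i) is then quick: from the first fact and $|w-\zeta|<\epsilon^{k_0}/2$ one gets $\phi(w)\ge-\log|w-\zeta|>k_0P$, that is $t(w)<0$; since $\phi$ is continuous near $w$ (recall $w\ne\zeta$), $t<0$ on a neighbourhood of $w$, on which $\chi(t)\equiv1$, hence $g_{k_0,w}=\log|z-w|$ there.

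For (ii) I would set $h:=4C\,g_{k_0,w}+\phi-\log(-\log|z-w|)$ and prove $H_h(X)\ge0$ on $\Omega\setminus\{w\}$; since $h$ is bounded above near $w$, plurisubharmonicity on all of $\Omega$ follows. First, $-\log(-\log|z-w|)=-\log u$ is plurisubharmonic on $\Omega$, because $u>0$ is plurisuperharmonic (being minus the plurisubharmonic function $\log|z-w|$) and $H_{-\log u}(X)=-H_u(X)/u+|Du(X)|^2/u^2\ge0$. Then I would partition $\Omega\setminus\{w\}$ by the value of $t$: where $t\le0$, $g_{k_0,w}=\log|z-w|$ and $h$ is a sum of the three plurisubharmonic functions $4C\log|z-w|$, $\phi$, $-\log u$; where $t\ge1$, $g_{k_0,w}=0$ and $h=\phi-\log u$ is again a sum of plurisubharmonic functions. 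Everything thus reduces to the transition zone $T=\{0<t(z)<1\}$, equivalently $P<\phi(z)<k_0P$. On $T$ the first fact gives $|z-\zeta|\ge e^{-k_0P}=\epsilon^{k_0}>2|w-\zeta|$, which forces $\tfrac12|z-\zeta|\le|z-w|\le\tfrac32|z-\zeta|$ and hence the crucial comparison
\[
0<u(z)=-\log|z-w|\ \le\ -\log|z-\zeta|+\log2\ \le\ \phi(z)+\log2\ \le\ 2\phi(z).
\]

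On $T$ I would then differentiate $h$. With $t'(\phi)=-1/(L\phi)$, $t''(\phi)=1/(L\phi^2)$, and writing $p:=D\phi(X)$, $q:=Dv(X)$, $a:=H_\phi(X)$, the computation presents $H_h(X)$ as
\[
a\Big(1-\frac{4C\chi'(t)v}{L\phi}\Big)+\frac{4C\chi''(t)v}{L^2\phi^2}|p|^2+\frac{4C\chi'(t)v}{L\phi^2}|p|^2-\frac{8C\chi'(t)}{L\phi}\mathrm{Re}(p\bar q)+\frac{|q|^2}{v^2}+N,
\]
where $N=4C\chi(t)H_v(X)+H_v(X)/u\ge0$ (using $H_v(X)\ge0$, since $\log|z-w|$ is plurisubharmonic). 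Now, using $\chi'\le0$, $v<0$, $|v|\le2\phi$, $v^2\le4\phi^2$, and the bounds on $|\chi'|$ and $|\chi''|$: the coefficient of $a$ is $\ge1-C_0/L$; each of the two $|p|^2$-terms is at most $(C_1/L)|p|^2$ in absolute value; and Cauchy--Schwarz bounds the cross term by $\tfrac12|q|^2/v^2+(C_2/L^2)|p|^2$; here $C_0,C_1,C_2$ depend only on $C$ and the cut-off $\chi$. Hence
\[
H_h(X)\ \ge\ \Big(1-\frac{C_0}{L}\Big)a-\Big(\frac{C_1}{L}+\frac{C_2}{L^2}\Big)|p|^2+\frac{|q|^2}{2v^2}.
\]
Finally, invoking the self-bounded gradient inequality $a\ge|p|^2$ and choosing $k_0$, hence $L=\log k_0$, so large that $C_0/L+C_1/L+C_2/L^2\le\tfrac12$, one gets $H_h(X)\ge\tfrac12|p|^2+\tfrac12|q|^2/v^2\ge0$, which proves (ii). This $k_0$ depends only on $C$ (hence on $n$, via $C>2(n+1)$) and on $F$, and the hypothesis $|w-\zeta|<\epsilon^{k_0}/2$ enters exactly to guarantee $|z-w|\approx|z-\zeta|$, i.e.\ $|v|\le2\phi$, throughout $T$.

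The hard part is the transition-zone estimate, and its crux is the self-bounded gradient bound $H_\phi(X)\ge|D\phi(X)|^2$ for $\phi=-\log F^{-1}(-\psi_\zeta)$, inherited from the convexity of $F$: plurisubharmonicity of $\phi$ alone allows $H_\phi(X)$ to vanish in a direction where $|D\phi(X)|^2>0$, leaving nothing to absorb the $|D\phi(X)|^2$-errors created by differentiating the cut-off. Granting that bound, what is left is bookkeeping --- checking that every error term carries a factor $1/L$ or $1/L^2$, so that a single sufficiently large choice of $k_0$ neutralises all of them at once --- but it must be done carefully, always with the comparison $\bigl|\log|z-w|\bigr|\le2\phi$ on $T$ in hand.
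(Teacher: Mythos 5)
Your proof is correct and follows essentially the same route as the paper's: the same Levi-form expansion of $\chi(t)\log|z-w|$, the same use of $|z-w|\gtrsim|z-\zeta|$ and $-\log|z-w|\lesssim\phi$ on the support of $\chi'$, the same self-bounded-gradient bound for $\phi$ coming from convexity of $F$ (the paper phrases it as $-F^{-1}(-\psi_\zeta)$ being negative plurisubharmonic), the same Cauchy--Schwarz absorption of the cross term into $i\di\phi\wedge\bar\di\phi$ and $i\di\bar\di(-\log(-\log|z-w|))$, and the same choice of $k_0$ large to kill the $1/\log k_0$ errors. Your write-up is merely more explicit about the constants than the paper's condensed version; no gap.
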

\begin{proof}
 We may assume that $|w-\zeta|<\epsilon^k/2$ where $k>1$ will be determined later on. Then we have
 $$
 \rho(w)\geq -F(|w-\zeta|) \geq - F(\epsilon^k/2)>-F(\epsilon^k).
 $$
From the definition of the cut-off function $\chi$ and $g_{k,w}$, we get
\[ \left\{z\in \Omega : \rho(z) >- F(\epsilon^k))\right\}\subset  \left\{z\in\Omega : g_{k,w}(z) =\log|z-w| \right\}.\]
Thus we conclude that $g_{k,w}(z)=\log |z-w|$ near $w$. 

A computation shows that
\begin{align}
\di\bar\di g_{k,w} &= \dfrac{\log|z-w|}{\phi \log k}\Big(\chi''(.)\dfrac{\di \phi \wedge\bar \di \phi}{\phi \log k}+\chi'(.)\dfrac{\di\phi \wedge\bar \di \phi}{\phi}-\chi'(.) \di\bar \di \phi \Big)\label{levi1}\\
	&-\dfrac{\chi'(.) \log|z-w| }{\phi \log k}\Big(\di \phi\wedge \dfrac{\bar \di \log |z-w|}{\log |z-w|}+\bar\di \phi \wedge\dfrac{\di \log |z-w|}{\log |z-w|}\Big)\\
	&+	\chi(.) \di\bar\di \log|z-w|\label{levi3}.	
\end{align}
It is clear that the term in \eqref{levi3} is non-negative and thus it can be neglected. For other terms, it is sufficient to consider them in the support of $\chi'$. Moreover, we have 
\[\text{supp} \chi' \subset \left\{z\in\Omega: \rho(z)< -F(\epsilon^k)  \right\}\subset \left\{z\in\Omega: |z-w_0|\geq \epsilon^k  \right\} \]
Therefore, one obtains
\[|z-w|  \geq |z-\zeta| - |w-\zeta|  \geq \dfrac{1}{2}|z-\zeta|
\] 
on $\mathrm{supp} \chi'(.)$ since $|w-\zeta|<\epsilon^k/2$, and hence 
$$
|\phi(z)|=|\log F^{-1}(-\rho(z))|\geq |\log |z-\zeta||\geq |\log 2|z-w||.
$$

Now Cauchy-Schwarz's inequality implies that
\begin{align*} 
\pm 2\text{Re}&\left( i\dfrac{\di \phi\wedge\bar\di \log |z-w|}{\log|z-w|}\right)\geq - i \di\phi \wedge\bar \di \phi\\
&-i\di\log(-\log |z-w|) \wedge\bar \di\log(-\log |z-w|)
\end{align*}
in the distribution sense. Moreover, since $F$ is convex and increasing, it follows that $ -F^{-1}(-\rho(z))$ is a negative plurisubharmonic function, and hence $i \di\bar \di \phi\geq  i\di\phi \wedge\bar \di\phi$. 

Combining above statements, there exists a constant $C'$ (depending only on $F$ and $\chi$) so that
\[i \di\bar\di g_{k,w}(z) \geq -\dfrac{C'}{\log k}\left(i \di\bar\di \phi + i\di\bar\di (-\log(-\log |z-w|))\right)  \]                                                                                          
provided $\phi>\log 2$ on $\Omega$ and
\begin{align*} 
i\di\bar\di (-\log(-\log |z-w|)))\geq i \di\log(-\log |z-w|) \wedge\bar \di\log(-\log |z-w|).
\end{align*}
 Therefore, if we take $k_0$ big enough so that $\dfrac{C'}{\log k_0}< \dfrac{1}{4C}$, then the assertion $\mathrm{(ii)}$ follows.
\end{proof}

\begin{proof}[Proof of Theorem \ref{thm2}]
We shall follow the guidelines of \cite{Che02}. First of all, we recall that
\[B_\Omega(w,X) = K^{-1/2}(w) \sup\{|Xf(w)| :f\in H^2(\Omega), f(w) =0 \text{ and }\|f\|_\Omega\leq 1\}.\]
Let $X\in T^{1,0}(\mathbb C^n)$. Since the Bergman metric is biholomorphic invariant, we may assume without loss of generality that $X= |X|\di_{z_1}$ .
   
 Recall that $\phi= -\log(F^{-1}(-\rho))$ and define 
\begin{align*}
 \delta=\delta(\epsilon):=\sup_{z\in \overline{\Omega}, \rho(z)\geq -F(\epsilon)}|z-\zeta|. 
\end{align*}
Note that $\delta\to 0$ as $\epsilon\to 0$ since $\rho$ is a plurisubharmonic peak function. Furthermore, by Theorem \ref{localization}, we may assume without loss of generality that $U\cap \Omega=\Omega$, $\mathrm{diam}(\Omega)\leq e^{-1}$, and $\phi>\log 2$. 

Let $k_0$ be given in Lemma \ref{lemma3.3} and fix a point $w\in \Omega$ with $|w-\zeta|<\epsilon^{k_0}/2$. We now define
\begin{align*}
&&\eta_\zeta(z) &= \kappa\left(-\log(-\log|z-\zeta|)  +\log (-\log \delta^{1/2}) +1\right ),& \\
&&\psi_\zeta(z) &= \dfrac{1}{2}\left(\phi(z) -\log(-\log|z-\zeta|)\right), &\\
\text{and}
&&\varphi_w(z) &= C g_w(z) - \dfrac{1}{4}\log(-\log (|z-w|)) +\frac{1}{2}\psi_\zeta(z),&
\end{align*}
where $g_w:=g_{k_0,w}$ is given in Lemma \ref{lemma3.3} and $\kappa\in \mathcal{C}^\infty(\mathbb R)$ is a cut-off function such that
\begin{align*}
\kappa(x)=
\begin{cases}
1~\text{if}~ x< 1-\log 2, \\
0 ~\text{if}~x  > 1;
\end{cases}
\end{align*}
and $C>2(n+1)$ comes from Lemma \ref{lemma3.3}. It is easy to check by a simple computation that $\varphi_w$ is plurisubharmonic, $i\di\bar\di\psi_\zeta\gtrsim  i \di\psi_\zeta \wedge\bar\di\psi_\zeta$ and $i\di\bar\di\psi_\zeta \gtrsim \frac{1}{2} i \di \log\left(-\log|z-\zeta|\right)\wedge \bar\di\log\left(-\log|z-\zeta|\right)$. Then we obtain
\begin{align*}
|\bar\di \eta_\zeta|_{\di\bar\di \psi_\zeta}  \leq \sqrt{2}\sup|\kappa'| . 
\end{align*}
Notice that $\mathrm{supp} g_w\subset \left\{z\in \Omega\colon \rho(z)\geq - F(\epsilon)\right\}\subset \left\{z\in \Omega\colon \eta_\zeta(z)=1\right\}$ and $\text{supp} \bar \partial \eta_\zeta\subset \{z\in \Omega\colon \delta\leq  |z-\zeta|\leq\delta^{1/2}\}$. This implies that $\eta_\zeta(w)=1$ and  $\text{supp} \bar\di\eta_\zeta \cap \text{supp}g_w = \emptyset$.

Now, we apply Proposition \ref{estimate} with $\nu=\dfrac{1}{2}$, $\varphi = \varphi_w$ and $\psi=\psi_\zeta$ to solve the $\bar\di$-equation 
\begin{align*}
\bar\di u_w = (z_1-w_1)\dfrac{K_\Omega(z,w)}{K_\Omega^{1/2}(w,w)}\bar\di \eta_\zeta,
\end{align*}
on $\Omega$ with the estimate
\begin{align*}
&\int\limits_\Omega |u_w|^2 e^{-C g_w + \frac{1}{4}\log(-\log |z-w|)) }dV \\
	&\lesssim  \int\limits_{\text{supp}\bar\di\eta_w}|z_1-w_1|^2\dfrac{|K^2_\Omega(z,w)|}{|K_\Omega(w,w)|}|\bar\di\eta_\zeta|^2_{\di\bar\di \psi_\zeta} e^{-C g_w + \frac{1}{4}\log(-\log |z-w|))}dV\\
	&\lesssim \int\limits_{\text{supp}\bar\di\eta_w}|z_1-w_1|^2\dfrac{|K^2_\Omega(z,w)|}{|K_\Omega(w,w)|}|\bar\di\eta_\zeta|^2_{\di\bar\di \psi_\zeta} \left(-\log|z-w|\right)^{1/4}dV\\
&\lesssim\int\limits_{\Omega\cap \{\delta< |z-\zeta|<\delta^{1/2}\}}|z-w|^2\left(-\log|z-w|\right)^{\frac{1}{4}}\dfrac{|K_\Omega(z,w)|^2}{|K_\Omega(w,w)|}dV\\
&\lesssim\int\limits_{\Omega\cap \{\delta< |z-\zeta|<\delta^{1/2}\}}\left((|z-\zeta|+|w-\zeta|\right)\dfrac{|K_\Omega(z,w)|^2}{|K_\Omega(w,w)|}dV\\
&\lesssim\int\limits_{\Omega\cap \{\delta< |z-\zeta|<\delta^{1/2}\}}\left( \delta^{1/2}+\frac{1}{2}\epsilon^{k_0}\right) \dfrac{|K_\Omega(z,w)|^2}{|K_\Omega(w,w)|}dV\\
	&\leq C_1 \left( \delta^{1/2}+\frac{1}{2}\epsilon^{k_0}\right),
\end{align*}	
where $C_1$ is a positive constant depending only on $\sup\kappa'$. 

Since $Cg_w(z) - \dfrac{1}{4}\log(-\log|z-w|) <0$ on $\Omega$, $g_w(z)= \log|z-w| $ near $w$, and 
\begin{align*}
C g_w(z)- \dfrac{1}{4}\log(-\log|z-w|)< 2(n+1)\log |z-w|
\end{align*}	
near $w$, it follows that $u_w(w) = 0, d u_w(w)=0$, and the function  
\[f_w(z) = (z_1-w_1)\dfrac{K_\Omega(z,w)}{K^{1/2}_\Omega(w,w)}\eta_w - u_w(z)\]
is holomorphic on $\Omega$ and satisfies
\[f_w(w) =u_w(w)= 0,  Xu_w(w)=0 , \text{ and } Xf_w(w) = |X|K^{1/2}_\Omega(w,w);\]
\begin{align*}
\|f_w\|_\Omega &\leq \left\|(z_1-w_1)\dfrac{K_\Omega( .,w)}{K^{1/2	}_\Omega(w)}\eta_\zeta\right\|_\Omega +\|u_w\|_\Omega\\
 &\leq C_2\left( \delta^{1/2}+\frac{1}{2}\epsilon^{k_0}\right) +\int\limits_\Omega |u_w|^2 e^{-C g_w + \frac{1}{4}\log(-\log |z-w|)) }dV\\
& \leq C_2\left( \delta^{1/2}+\frac{1}{2}\epsilon^{k_0}\right)+ C^{1/2}_1\left( \delta^{1/2}+\frac{1}{2}\epsilon^{k_0}\right)^{1/2}\\
 &\leq C_3\left( \delta^{1/2}+\frac{1}{2}\epsilon^{k_0}\right)^{1/2},
\end{align*}
where $C_2, C_3$ are positive constants. 

Define $h_w= \dfrac{f_w}{\|f_w\|}$. Then $h_w$ is also holomorphic on $\Omega$, $h_w(w) =0$, and $\|h_w\|=1$. Therefore, we conclude that
\begin{align*}
B_\Omega(w,X)\geq \dfrac{|Xh_w(w)|}{K^{1/2}_\Omega(w)}= \dfrac{|Xf_w(w)|}{K^{1/2}_\Omega(w)\|f_w\|_\Omega}\geq C_3^{-1}\left( \delta^{1/2}+\frac{1}{2}\epsilon^{k_0}\right)^{-1/2} |X|
\end{align*}
for any $w\in \Omega\cap \{w\in \Omega\colon |w-\zeta|<\epsilon^{k_0}/2\}$ and $X\in T^{1,0}_z\mathbb C^n$. So, the proof is complete.
\end{proof}
\begin{proof}[Proof of Theorem \ref{thm}] We shall repeat the argument as in the proof of Theorem \ref{thm2}. For any $w\in \Omega\cap U$, let $w':=\pi(w)$. It means that $|w-w'|=\delta_\Omega(w)$. Then we take $\epsilon:=(3\delta_\Omega(w))^{1/k_0}$. Hence, it is clear that $|w-w'|<\epsilon^{k_0}/2$ and
$$
\delta=\sup_{z\in \overline{\Omega}, \psi_{w'}(z)\geq -F(\epsilon)}|z-w'|\leq G^{-1}(F(\epsilon)) 
$$
because $-\psi_{w'}(z)\geq G(|z-w'|)$. Therefore, we obtain 
\begin{align*}
B_\Omega(w,X)&\gtrsim \left( \delta^{1/2}+\frac{1}{2}\epsilon^{k_0}\right)^{-1/2} |X|\\
&\gtrsim\left(  \left( G^{*}(F((3\delta_\Omega(w))^{1/k_0})) \right)^{1/2}+ \frac{3}{2} \delta_\Omega(w) \right)^{-1/2}|X|,
\end{align*}
for all $w\in U\cap \Omega$ and $X\in T^{1,0}_z\mathbb C^n$, which proves the theorem.
\end{proof}

We now ready to prove Theorem \ref{main}. 
\begin{proof}[Proof of Theorem \ref{main}]
Denote by $F(t):=c_1 t^\eta $ and $G(t):=\left(\frac{1}{g^*(c_2/t)}\right)^\eta$ for all $t\geq 1$, where $0<\eta<1$. Then, a computation shows that
$$
G^{-1}\left( F((3\delta(z))^{1/k_0})\right)=c_2/g\left(\frac{1}{c_1^{1/\eta}(3\delta(z))^{1/k_0}}\right).
$$
Therefore, by Corollary \ref{co2.3} and employing Theorem \ref{thm} for $F(t)=c_1 t^\eta $ and $G(t)=\left(\frac{1}{g^*(c_2/t)}\right)^\eta$, where $\eta\in (0,1)$ is given in Corollary \ref{co2.3}, we obtain
\begin{align*}
B_\Omega(z,X) &\gtrsim \left(  \left( G^{*}(F((3\delta_\Omega(w))^{1/k_0})) \right)^{1/2}+ \frac{3}{2} \delta_\Omega(w) \right)^{-1/2}|X|\\
&\gtrsim \left(  \left( c_2/g\left(\frac{1}{c_1^{1/\eta}(3\delta(z))^{1/k_0}}\right) \right)^{1/2}+ \frac{3}{2} \delta_\Omega(w) \right)^{-1/2}|X|
\end{align*}
for any $z\in V\cap \Omega$ and $X\in T^{1,0}_z\C^n$. Moreover, by the increasing property of $g$ and decreasing property of $g(t)/t$, we conclude that
\[B_\Omega(z,X) \gtrsim \tilde{g}(\delta^{-1}_\Omega(z))|X|\]
for any $z\in V\cap \Omega$ and $X\in T^{1,0}_z\C^n$, where the function $\tilde g$ is given by
\begin{align*}
\tilde g(t)= \sqrt[4]{g(t^{1/k_0})}~\text{for every}~t\geq 1.
\end{align*}
Hence, the proof is complete. 
\end{proof}
\begin{proof}[Proof of Corollary \ref{Cor1}] 

\noindent
a) Suppose that $\Omega\Subset \mathbb C^n$ is of finite type. Then, it has been proven in \cite{Cat87, Mc92} that $\Omega$ satisfies the $(t^\epsilon,P)$-property for some $0<\epsilon<1$. Then, the function $g(t)\approx t^\epsilon, t>1$. Moreover, since $t^{\epsilon/(4k_0)}=o(t)$ as $t\to +\infty$,  it follows that  
\begin{align*}
\tilde g(t)\approx\left(  \left( c_2/g\left(\frac{t^{1/k_0}}{c_1^{1/\eta}3^{1/k_0}}\right) \right)^{1/2}+ \frac{3}{2t} \right)^{-1/2}\approx  t^{\epsilon/(4k_0)}~\text{for every}~t\geq 1.
\end{align*}
Therefore, the estimate (\ref{eqt1.1}) holds for $\tilde g(t)\approx t^\delta,t>1$, where $\delta:=\frac{\epsilon}{4k_0}>0$. 
 
\noindent
b) Let $\Omega\Subset \mathbb C^n$ be a domain defined by
\begin{equation*}
\Omega=\left\{z\in \mathbb C^n\colon \mathrm{Re}(z_n)+\sum_{j=1}^{n-1} P_j(z_j)<0 \right\},
\end{equation*}
where $P_j\in \mathcal{C}^\infty(\mathbb C)$ with $\Delta P_j(z_j)\gtrsim \frac{\exp(-1/|x_j|^\alpha)}{x_j^2}$ or $\frac{\exp(-1/|y_j|^\alpha)}{y_j^2}$ for $j=1,\ldots,n-1$ and $0<\alpha<1$. Then, the $(f,P)$-property holds with $f(t)=\log^{1/\alpha} t$ ( see \cite{KZ10}). Therefore, a computation shows that
\begin{align*}
g(t)\approx \log^{\frac{1}{\alpha}-1} (t), ~\text{for every}~t>1,
\end{align*}
and 
\begin{align*}
\tilde g(t)\approx\left(  \left( c_2/g\left(\frac{t^{1/k_0}}{c_1^{1/\eta}3^{1/k_0}}\right) \right)^{1/2}+ \frac{3}{2t} \right)^{-1/2}\approx   \log^{\left(\frac{1}{\alpha}-1\right)/4} (t),~t> 1.
\end{align*}
Hence, the estimate (\ref{eqt1.1}) holds for $\tilde g(t)=\log^{\left(\frac{1}{\alpha}-1\right)/4} (t), ~t>1$. 

Altogether, the proof of Corollary \ref{Cor1} is complete.
\end{proof}

\begin{Acknowledgement}  This work was completed when the second author was visiting the Vietnam Institute for Advanced Study in Mathematics (VIASM). He would like to thank the VIASM for financial support and hospitality. The  second author was supported by NAFOSTED under grant number 101.02-2017.311. It is a pleasure to thank Tran Vu Khanh for stimulating discussions. Especially, we would like to express our gratitude to the refrees. Their valuable comments on the first version of this paper led to significant improvements.
\end{Acknowledgement}

\end{document}